\newtheorem{theorem}{Theorem}[section]
\newtheorem{corollary}[theorem]{Corollary}
\newtheorem{lemma}[theorem]{Lemma}
\newtheorem{proposition}[theorem]{Proposition}
\theoremstyle{remark}
\newtheorem{remark}[theorem]{Remark}}
\theoremstyle{definition}
\newtheorem{definition}[theorem]{Definition}
\newtheorem{notation}[theorem]{Notation}
\newtheorem{example}[theorem]{Example}
\newcommand{\PP}[0]{\ensuremath{\mathbb{P}}}
\newcommand{\ZZ}[0]{\ensuremath{\mathbb{Z}}}
\newcommand{\AF}[0]{\ensuremath{\mathbb{A}}}
\newcommand{\QQ}[0]{\ensuremath{\mathbb{Q}}}
\newcommand{\TT}[0]{\ensuremath{\mathbb{T}}}
\newcommand{\KK}[0]{\ensuremath{k}}
\newcommand{\OO}[0]{\ensuremath{\mathcal{O}}}
\newcommand{\tA}[0]{\ensuremath{\widetilde{A}}}
\newcommand{\tX}[0]{\ensuremath{\widetilde{X}}}
\newcommand{\DD}[0]{\ensuremath{\mathfrak{D}}}
\newcommand{\FF}[0]{\ensuremath{\mathscr{F}}}
\newcommand{\spec}[0]{\ensuremath{\operatorname{Spec}}}
\newcommand{\supp}[0]{\ensuremath{\operatorname{Supp}}}
\newcommand{\divi}[0]{\ensuremath{\operatorname{div}}}
\newcommand{\pol}[0]{\ensuremath{\operatorname{Pol}}}
\newcommand{\rank}[0]{\ensuremath{\operatorname{rank}}}
\newcommand{\homo}[0]{\ensuremath{\operatorname{Hom}}}
\newcommand{\relint}[0]{\ensuremath{\operatorname{rel.int}}}
\newcommand{\pic}[0]{\ensuremath{\operatorname{Pic}}}
\begin{document}


\title[]{Rational singularities of normal $\TT$-varieties}

\author{Alvaro Liendo}
\address{Universit\'e
Grenoble I, Institut Fourier, UMR 5582 CNRS-UJF, BP 74, 38402
St.\ Martin d'H\`eres c\'edex, France}
\email{alvaro.liendo@ujf-grenoble.fr}

\date{\today}

\thanks{
\mbox{\hspace{11pt}}{\it 2000 Mathematics Subject
Classification}:
14J17, 14E15.\\
\mbox{\hspace{11pt}}{\it Key words}:  torus actions, $\TT$-varieties, rational singularities, Cohen-Macaulay singularities, elliptic surface singularities.}

\begin{abstract}
A $\TT$-variety is an algebraic variety $X$ with an effective regular action of an algebraic torus $\TT$. Altmann and Hausen \cite{AlHa06} gave a combinatorial description of an affine $\TT$-variety $X$ by means of polyhedral divisors. In this paper we compute the higher direct images of the structure sheaf of a desingularization of $X$ in terms of this combinatorial data. As a consequence, we give a criterion as to when a $\TT$-variety has rational singularities. We also provide a partial criterion for a $\TT$-variety to be Cohen-Macaulay. As an application we characterize in this terms quasihomogeneous elliptic singularities of surfaces.
\end{abstract}

\maketitle



\section*{Introduction} 
Let $\KK$ be an algebraically closed field of characteristic 0. In this paper we study $\TT$-varieties i.e., varieties endowed with an action of an algebraic torus $\TT=(\KK^*)^n$. A $\TT$-action on $X=\spec\,A$ gives rise to an $M$-grading on $A$, where $M$ is a lattice of rank $n$ and vice versa, any effective $M$-grading appears in this way (see e.g. \cite{KaRu82}). The complexity of a $\TT$-action is the codimension of a general orbit. For an effective $\TT$-action, the complexity equals $\dim X-\dim \TT$. 

There are well known combinatorial descriptions of normal $\TT$-varieties. For toric varieties see e.g., \cite{Dem70}, \cite[Ch. 1]{KKMS73}, and \cite{Oda88}. For affine $\KK^*$-surfaces see \cite{FlZa03c}. For complexity 1 case see \cite[Ch. 2 and 4]{KKMS73}, and more generally \cite{Tim08}. Finally, for arbitrary complexity see \cite{AlHa06,AHS08}.

We let $N=\homo(M,\ZZ)$,  $M_{\QQ}=M\otimes\QQ$, and $N_{\QQ}=N\otimes\QQ$. Any affine toric variety can be described via the weight cone $\sigma^\vee\subseteq M_{\QQ}$ spanned over $\QQ_{\geq0}$ by all $m\in M$ such that $A_m\neq\{0\}$ or, alternatively, via the dual cone $\sigma\subseteq N_{\QQ}$. Similarly, the description of normal affine $\TT$-varieties due to Altmann and Hausen deals with a polyhedral cone $\sigma\subseteq N_{\QQ}$ (dual to the weight cone $\sigma^\vee\subseteq M_{\QQ}$), a normal semiprojective variety $Y$, and a divisor $\DD$ on $Y$ whose coefficients are polyhedra in $N_{\QQ}$ invariant by translations in $\sigma$.

Let $X$ be a normal variety and let $\psi:Z\rightarrow X$ be a desingularization. Usually, the classification of singularities involves the higher direct images of the structure sheaf $R^i\psi_*\OO_Z$. In particular, $X$ has rational singularities if $R^i\psi_*\OO_Z=0$ for all $i\geq 1$, see e.g., \cite{Art66,KKMS73,Elk78}. 

Rational singularities are Cohen-Macaulay. Recall that a local ring is Cohen-Macau\-lay if its Krull dimension equals to the depth. A variety $X$ is called Cohen-Macaulay if all the local rings $\OO_{X,x}$ are Cohen-Macaulay. By a well known theorem of Kempf \cite[p. 50]{KKMS73}, a variety $X$ has rational singularities if and only if $X$ is Cohen-Macaulay and the induced map $\psi_*:\omega_Z\hookrightarrow \omega_X$ is an isomorphism. 

The content of the paper is as follows. In Section 1 we recall the combinatorial description of $\TT$-varieties due to Altmann and Hausen. In Section 2  we obtain our main results concerning the higher direct images of the structure sheaf of a desingularization of a $\TT$-variety. More precisely, in Theorem \ref{Tdirect} we compute the higher direct image sheaves $R^i\psi_*\OO_Z$ for a $\TT$-variety $X$ in terms of the Altmann-Hausen combinatorial data $(Y,\DD,\sigma)$. In Theorem \ref{Trat} we give a criterion in this terms as to when a $\TT$-variety has rational singularities. In Corollary \ref{cmgen} we apply Kempf's result to give a condition for a $\TT$-variety to be Cohen-Macaulay.

Finally in Section 4 we apply previous results to characterize elliptic singularities of $\KK^*$-surfaces. A normal surface singularity $(X,x)$ is called elliptic if $R^i\psi_*\OO_Z=0$ for all $i\geq 2$ and $R^1\psi_*\OO_Z=\KK$, see  e.g., \cite{Lau77,Wat80,Yau80}. An elliptic singularity is called minimal if it is Gorenstein i.e., is Cohen-Macaulay and the canonical sheaf $\omega_X$ is invertible. In Proposition \ref{gor} we give a criterion for a surface with a 1-torus action to be Gorenstein. In Theorem \ref{ellip} we characterize  (minimal) elliptic singularities in the combinatorial terms.

\smallskip
The author is grateful to Mikhail Zaidenberg for posing the problem and permanent encouragement, and to Hubert Flenner for useful discussions.



\section{Preliminaries}

\subsection{Combinatorial description of affine $\TT$-varieties}
\label{AHdes}

In this section we recall briefly the combinatorial description of affine $\TT$-varieties given by Altmann and Hausen in \cite{AlHa06}.

Let $N$ be a lattice of rank $n$ and $M=\homo(N,\ZZ)$ be its dual lattice. We fix dual bases $\{\nu_1,\cdots,\nu_n\}$ and $\{\mu_1,\cdots,\mu_n\}$ for $N$ and $M$, respectively. We also let $N_{\QQ}=N\otimes\QQ$, $M_{\QQ}=M\otimes\QQ$, and we consider the natural duality $M_{\QQ}\times N_{\QQ}\rightarrow \QQ$, $(m,p)\mapsto \langle m,p\rangle$. 

Let $\TT=\spec\KK[M]$ be the $n$-dimensional algebraic torus associated to $M$ and
let $X=\spec\,A$ be an affine $\TT$-variety. It is well known that the morphism $A\rightarrow A\otimes \KK[M]$ induces an $M$-grading on $A$ and, conversely, every $M$-grading on $A$ arises in this way. Furthermore, a $\TT$-action is effective if an only if the corresponding $M$-grading is effective. 

Let $\sigma$ be a pointed polyhedral cone in $N_{\QQ}$. We denote the dual cone by $\sigma^\vee$ and we let $\sigma_M^\vee=\sigma^\vee\cap M$. We define $\pol_{\sigma}(N_{\QQ})$ to be the set of all polyhedra in $N_{\QQ}$ which can be decomposed as the Minkowski sum of a compact polyhedron and $\sigma$. The Minkowsi sum endows $\pol_{\sigma}(N_{\QQ})$ with a structure of semigroup with neutral element $\sigma$.

To any polyhedron $\Delta\in\pol_{\sigma}(N_{\QQ})$ we associate its support function $h_{\Delta}:\sigma^\vee\rightarrow \QQ$ defined by $h_{\Delta}(m)=\min\langle m,\Delta\rangle$. Clearly the function $h_{\Delta}$ is  piecewise linear. Furthermore, $h_{\Delta}$ is upper convex and positively homogeneous i.e., 
$$h_{\Delta}(m+m')\geq h_{\Delta}(m)+h_{\Delta}(m'),\ \mbox{and}\ h_{\Delta}(\lambda m)=\lambda h_{\Delta}(m),\forall m,m'\in \sigma^{\vee},\ \forall\lambda\in \QQ_{\geq 0}\,.$$ 

\begin{definition} \label{ppd}
A variety $Y$ is called semiprojective if it is projective over an affine variety. A \emph{$\sigma$-polyhedral divisor} on $Y$ is a formal finite sum $\DD=\sum_{D}\Delta_D\cdot D$, where $D$ runs over prime divisors on $Y$, $\Delta_D\in\pol_{\sigma}(N_{\QQ})$ and $\Delta_D=\sigma$ for all but finitely many $D$. For $m\in\sigma^{\vee}$ we can evaluate $\DD$ in $m$ by letting $\DD(m)$ be the $\QQ$-divisor
$$\DD(m)=\sum_{D} h_D(m) D\,,$$
where $h_D=h_{\Delta_D}$. A $\sigma$-polyhedral divisor $\DD$ is called \emph{proper} if the following hold.
\begin{enumerate}[(i)]
\item The evaluation $\DD(m)$ is a semiample\footnote{A $\QQ$-divisor $D$ is called semiample if there exists $r>1$ such that the linear system $|rD|$ is base point free.} $\QQ$-Cartier divisor for all $m\in\sigma_M^\vee$, and
\item $\DD(m)$ is big for all $m\in\relint(\sigma^\vee)$.
\end{enumerate}
\end{definition}

The following theorem gives a combinatorial description of $\TT$-varieties analogous to the classical combinatorial description of toric varieties.

\begin{theorem} \label{AH}
Let $\DD$ be a proper $\sigma$-polyhedral divisor on a semiprojective variety $Y$. Consider the 
schemes\footnote{For a $\QQ$-divisor $D$, $\OO(D)$ stands for $\OO(\lfloor D\rfloor)$, where $\lfloor D\rfloor$ is the integral part of $D$.}
$$\tX=\spec_Y \tA[Y,\DD], \quad \mbox{where} \quad \tA[Y,\DD]=\bigoplus_{m\in\sigma_M}\OO_Y(\DD(m))\,,$$ 
$$\mbox{and}\quad X=\spec A[Y,\DD], \quad \mbox{where} \quad A[Y,\DD]=H^0(\tX,\OO_{\tX})=H^0(Y,\tA)\,.$$
Then the following hold.
\begin{enumerate}[(i)]
\item 
The schemes $X$ and $\tX$ are normal varieties of dimension $\dim Y+\rank M$.
\item
The $M$-gradings on $A[C,\DD]$ and $\tA[C,\DD]$ induce effective $\TT$-actions on $X$ and $\tX$,  respectively. Moreover, the canonical morphism $\pi:\tX\rightarrow Y$ is a good quotient for the $\TT$-action on $\tX$.
\item
There is a $\TT$-equivariant proper contraction $\varphi:\tX\rightarrow X$.
\end{enumerate}

Conversely, any normal affine variety with an effective $\TT$-action is equivariantly isomorphic to some $X=\spec A[Y,\DD]$ for a quasiprojective variety $Y$ and a proper $\sigma$-polyhedral divisor $\DD$ on $Y$.
\end{theorem}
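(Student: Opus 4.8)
The plan is to construct $\tX$ and $X$ directly from the combinatorial data $(Y,\DD,\sigma)$, to establish (i)--(iii) one at a time, and then to invert the construction. First I would check that $\tA:=\tA[Y,\DD]$ is a quasi-coherent sheaf of $\OO_Y$-algebras and a sheaf of integral domains. The product $\OO_Y(\DD(m))\cdot\OO_Y(\DD(m'))\subseteq\OO_Y(\DD(m+m'))$, taken inside the constant sheaf $\KK(Y)$, is legitimate precisely because each support function $h_D$ is upper convex, so that $\lfloor\DD(m)\rfloor+\lfloor\DD(m')\rfloor\le\lfloor\DD(m+m')\rfloor$; thus $\tA$ is an $M$-graded subalgebra of $\KK(Y)\otimes_{\KK}\KK[M]$, and so is $A:=H^0(Y,\tA)$. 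Local finite generation of $\tA$ over $\OO_Y$ is where properness enters: choosing generators $m_1,\dots,m_s$ of the semigroup $\sigma_M^\vee$ and, by the semiampleness of Definition \ref{ppd}(i), a common positive integer $\ell$ such that each $\ell\DD(m_i)$ is a base point free Cartier divisor, one bounds a set of generators of $\tA(U)$ over $\OO_Y(U)$ for every affine open $U\subseteq Y$. Hence $\pi\colon\tX=\spec_Y\tA\to Y$ is an affine morphism, $\tX$ is a separated $\KK$-scheme of finite type, $A$ is a finitely generated $\KK$-algebra, and $X=\spec A$ is well defined.

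For part (i), I would read off $\dim\tX$ from the generic fibre of $\pi$: over the generic point $\eta$ of $Y$ every $\QQ$-divisor $\DD(m)$ is trivial, so $\tA_\eta=\KK(Y)[\sigma_M^\vee]$; since $\sigma$ is pointed, $\sigma^\vee$ is full dimensional and $\sigma_M^\vee$ generates $M$ as a group, so $\fract(\tA_\eta)$ has transcendence degree $\dim Y+\rank M$ over $\KK$ and therefore $\dim\tX=\dim Y+\rank M$. Normality I would check locally on the (normal) base $Y$: a local computation presents $\tA$ near a point $y$ as an intersection of integrally closed subalgebras of $\KK(Y)\otimes_{\KK}\KK[M]$ (one attached to each prime divisor through $y$, of toric type over a discrete valuation ring), and an intersection of normal domains inside a common field is normal; hence $\tA(U)$ is normal for every affine $U$, so $\tX$ is normal, and $A=\bigcap_{y\in Y}\tA_y$ is normal as well.

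For parts (ii) and (iii): since $\sigma_M^\vee$ generates $M$, the $M$-gradings on $\tA$ and on $A$ are effective and give effective $\TT$-actions on $\tX$ and $X$. From $h_D(0)=0$ one has $\tA_0=\OO_Y$, so $\pi$ is the relative spectrum of the sheaf of $\TT$-invariants, $(\pi_*\OO_{\tX})^{\TT}=\OO_Y$, and $\pi$ is a good quotient. The $\TT$-equivariant morphism $\varphi\colon\tX\to X$ attached to $A=H^0(\tX,\OO_{\tX})$ is birational --- here one uses the bigness of Definition \ref{ppd}(ii), which forces $\fract A=\fract\tA(U)=\KK(\tX)$ --- and hence a genuine contraction. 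I expect its properness to be the main obstacle: I would verify the valuative criterion by first extending a test discrete valuation from $X$ to a point of $Y$ using semiprojectivity of $Y$, and then invoking properness of $\DD$ to extend the resulting lift across each graded piece $\OO_Y(\DD(m))$.

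For the converse, given a normal affine $X=\spec A$ with an effective $M$-grading $A=\bigoplus_{m\in\omega_M}A_m$, let $\omega\subseteq M_{\QQ}$ be the weight cone of the grading, set $\sigma=\omega^\vee$, and let $\KK(Y):=K_0$ be the field of degree-zero homogeneous elements of $K=\fract A$, which is finitely generated over $\KK$. I would choose a normal semiprojective model $Y$ with function field $\KK(Y)$ --- for instance the normalization of $\proj$ of the Rees algebra $\bigoplus_{k\ge0}A_{km_0}$ for a generic $m_0\in\relint(\omega_M)$, refined so that the prime divisors occurring in a fixed finite homogeneous generating set of $A$ are visible on $Y$ --- and, for each prime divisor $D\subseteq Y$, define a coefficient $\Delta_D\in\pol_{\sigma}(N_{\QQ})$ by prescribing its support function $m\mapsto\min\{\ord_D(f):f\in A_m\setminus\{0\}\}$. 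The delicate remaining task, which I also expect to be an obstacle, is to verify that $\DD=\sum_D\Delta_D\cdot D$ is a proper $\sigma$-polyhedral divisor --- semiampleness and bigness being inherited from the fact that the $A_m$ are spaces of functions on a genuine affine variety --- and that $H^0(Y,\OO_Y(\DD(m)))=A_m$ for all $m\in\omega_M$, so that $A[Y,\DD]\cong A$ $\TT$-equivariantly.
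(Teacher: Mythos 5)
First, a point of orientation: the paper does not prove Theorem~\ref{AH} at all --- it is quoted verbatim from Altmann and Hausen \cite{AlHa06}, and Section~1 is explicitly a recollection of their results. So the only thing to measure your proposal against is the original argument in \emph{loc.\ cit.}, whose overall strategy (relative spectrum of the $M$-graded sheaf $\tA$, normality via intersections of valuation conditions inside $\KK(Y)\otimes\KK[M]$, good quotient from $\tA_0=\OO_Y$, converse via the weight cone and a semiprojective model of the field of degree-zero fractions) your sketch does follow.

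As a proof, however, the proposal has genuine gaps, and they sit exactly at the hard points. (1) Local finite generation of $\tA$: knowing that $\ell\DD(m_i)$ is base-point-free Cartier for semigroup generators $m_i$ of $\sigma_M^\vee$ does not by itself bound generators of $\tA(U)$, because the multiplication maps $\OO_Y(\DD(m))\otimes\OO_Y(\DD(m'))\to\OO_Y(\DD(m+m'))$ need not be surjective (the inequality $\DD(m)+\DD(m')\le\DD(m+m')$ can be strict); Altmann--Hausen control this by passing to the quasifan of linearity domains of $m\mapsto\DD(m)$ and invoking Zariski-type finite generation for section rings of semiample divisors. (2) Properness of $\varphi$ is only announced as ``the main obstacle'' with a one-line valuative-criterion plan; this is part of the content of the theorem, not an argument for it. (3) In the converse, the claims that the order functions are support functions of genuine elements of $\pol_\sigma(N_{\QQ})$ (piecewise linear, finitely many linearity domains, tail cone $\sigma$), that the resulting $\DD$ is proper, and that $A_m=H^0(Y,\OO_Y(\DD(m)))$ for every $m$, are all deferred --- and they are the substance of the converse. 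There is also a sign slip: with the paper's convention $h_{\Delta}(m)=\min\langle m,\Delta\rangle$, so that $H^0(Y,\OO_Y(\DD(m)))=\{f:\ord_D f\ge -h_D(m)\ \forall D\}$, the support function attached to $D$ must be $m\mapsto-\min\{\ord_D(f):f\in A_m\setminus\{0\}\}$, not $m\mapsto\min\{\ord_D(f)\}$. In short, you have a correct roadmap of the Altmann--Hausen proof, but the steps left open are precisely the ones that make the theorem nontrivial.
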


\begin{remark} \label{aff}
In the case where $Y$ is affine the contraction $\varphi$ is an isomorphism since any quasi-coherent sheaf on an affine variety is generated by global sections.
\end{remark}

\subsection{Divisors on $\TT$-varieties} \label{T-div}

In \cite{FlZa03c} a characterization of $\TT$-invariant divisors on an affine $\KK^*$-surface is given. In \cite{PeSu08} some of these results are generalized to the case of a $\TT$-variety of complexity 1. In this section we recall results from \cite{PeSu08} and add some minor generalizations that we need. 

Letting $\DD$ be a proper polyhedral divisor on a semiprojective variety $Y$ we let $X=\spec A[Y,\DD]$ and $\tX=\tA[Y,\DD]$. Since the contraction $\varphi:\tX\rightarrow X$ is equivariant, the $\TT$-invariant prime Weil divisors on $X$ are in bijection with the $\TT$-invariant prime Weil divisors on $\tX$ not contracted by $\varphi$.

We first apply the orbit decomposition of the variety $\tX$ in Proposition 7.10 and Corollary 7.11 of \cite{AlHa06} to obtain a description of the $\TT$-invariant prime Weil divisors in $\tX$.  There are 2 types of $\TT$-invariant prime Weil divisors on $\tX$: the horizontal type corresponding to families of $\TT$-orbits closures of dimension $\rank M-1$ over $Y$; and the vertical type corresponding to families of $\TT$-orbits closures of dimension $\rank M$ over a prime divisor on $Y$.

\begin{lemma}
Let $\DD=\sum_D\Delta_D\cdot D$ be a proper $\sigma$-polyhedral divisor on a normal semiprojective variety $Y$. Letting $\tX=\tA[Y,\DD]$, the following hold.
\begin{enumerate}[(i)]
 \item The $\TT$-invariant prime Weil divisors on $\tX$ of horizontal type are in bijection with the extremal rays of $\rho\subseteq\sigma$. 
 \item The $\TT$-invariant prime Weil divisors on $\tX$ of vertical type are in bijection with pairs $(D,p)$ where $D$ is a prime Weil divisor on $Y$ and $p$ is a vertex of $\Delta_D$. 
\end{enumerate}
\end{lemma}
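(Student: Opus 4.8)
The plan is to use the orbit decomposition of $\tX$ from \cite[Prop.~7.10, Cor.~7.11]{AlHa06}, which describes $\TT$-orbits in $\tX$ in terms of the combinatorial data, and to translate the statement about orbit-closure families into a statement about prime divisors. Recall that $\pi:\tX\to Y$ is a good quotient, so every $\TT$-invariant closed subset of $\tX$ maps to a closed subset of $Y$, and the fibers of $\pi$ decompose into $\TT$-orbits governed by the polyhedra $\Delta_D$ and the cone $\sigma$. A $\TT$-invariant prime Weil divisor $E\subseteq\tX$ has codimension $1$, hence dimension $\dim Y+\rank M-1$; since a general $\TT$-orbit in $\tX$ has dimension $\rank M$, the generic $\TT$-orbit in $E$ has dimension either $\rank M$ (the vertical case, and then $\pi(E)$ has dimension $\dim Y-1$, i.e.\ $\pi(E)$ is a prime divisor $D$ on $Y$) or $\rank M-1$ (the horizontal case, and then $\pi(E)=Y$). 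This dichotomy is exactly the horizontal/vertical split mentioned before the lemma, and it reduces the problem to counting, in each case, the codimension-one families.

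For the horizontal part (i), I would look at the generic fiber of $\pi$: over the generic point $\eta$ of $Y$, the data degenerates to a toric situation where the ``tailcone'' $\rho$ (the common recession cone, which is a face of $\sigma$, and in the generic situation equals $\sigma$ — here I should double-check the paper's convention, but in any case $\rho\subseteq\sigma$ is the relevant cone) governs the orbit structure. A $\TT$-invariant prime divisor of horizontal type restricts over $\eta$ to a $\TT_\eta$-invariant prime divisor of the toric variety attached to $\rho$, and those are classically in bijection with the rays of $\rho$. Conversely each ray of $\rho$ gives a family of codimension-one orbit closures over all of $Y$, hence a single prime divisor of horizontal type. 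The key point to verify is that this assignment is well-defined and bijective globally, not just over $\eta$ — this follows because the horizontal divisors are, by definition, closures of families over $Y$, so each is determined by its restriction to the generic fiber.

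For the vertical part (ii), fix a prime divisor $D$ on $Y$ with coefficient polyhedron $\Delta_D$. Over the generic point of $D$, the orbit decomposition of \cite{AlHa06} identifies the $\TT$-orbit closures lying over $D$ and their dimensions with the faces of $\Delta_D$: the full-dimensional orbit closures (dimension $\rank M$) correspond precisely to the vertices of $\Delta_D$, while higher-dimensional faces give lower-dimensional families. So the $\TT$-invariant prime divisors lying over $D$ (these have a generic orbit of dimension $\rank M$ by the codimension count above) are in bijection with the vertices $p$ of $\Delta_D$, giving the pairs $(D,p)$; summing over all prime divisors $D$ — only finitely many of which have $\Delta_D\neq\sigma$, and for $\Delta_D=\sigma$ the fiber over the generic point of $D$ is a single orbit contributing no vertical divisor — yields the stated bijection. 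The main obstacle I anticipate is bookkeeping: carefully matching the dimension counts of orbit-closure families against codimension one and quoting the Altmann–Hausen orbit decomposition in exactly the right form, in particular making sure that ``vertex of $\Delta_D$'' (as opposed to some other face) is the condition that produces an honest divisor and that the tailcone $\rho$ is correctly identified for (i).
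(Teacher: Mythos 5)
Your proposal is correct and follows essentially the same route as the paper: the paper's proof simply cites the orbit decomposition of Proposition 7.10 and Corollary 7.11 in \cite{AlHa06} (together with the proof of Proposition 3.13 in \cite{PeSu08}), which is exactly the translation of orbit-closure families into horizontal and vertical divisors that you spell out. (Incidentally, ``extremal rays of $\rho\subseteq\sigma$'' in the statement is a typo for the extremal rays $\rho$ of $\sigma$, so your reading of the tailcone is the intended one.)
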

\begin{proof}
The lemma follows from Proposition 7.10 and Corollary 7.11 of \cite{AlHa06}. See also the proof of Proposition 3.13 in \cite{PeSu08}.
\end{proof}

The following lemma is a reformulation of Proposition 3.13 in \cite{PeSu08}.
\begin{lemma} \label{Tcont}
Let $\DD=\sum_D\Delta_D\cdot D$ be a proper $\sigma$-polyhedral divisor on a normal semiprojective variety $Y$. The following hold.
\begin{enumerate}[(i)]
\item Let $\rho\subseteq\sigma$ be an extremal ray and let $\tau$ be is dual codimension 1 face. The $\TT$-invariant prime Weil divisors of horizontal type on $\tX$ corresponding to $\rho$ is not contracted by $\varphi$ if and only if $\DD(m)$ is big for all $m\in\relint(\tau)$.
\item Let $D$ be a prime Weil divisor on $Y$ and let $p$ be a vertex of $\Delta_D$. The $\TT$-invariant prime Weil divisors on $\tX$ of vertical type $(D,p)$ is not contracted by $\varphi$ if and only if $\DD(m)|_D$ is big for all $m\in\relint((\Delta_D-p)^\vee)$. 
\end{enumerate}
\end{lemma}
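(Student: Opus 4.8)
The plan is to express the condition ``not contracted by $\varphi$'' through the center of a valuation on $X$, and then to run a dimension count based on the Altmann--Hausen presentation. Since $\varphi\colon\tX\to X$ is a proper birational $\TT$-equivariant morphism and $\dim\tX=\dim X=\dim Y+\rank M$, every $\TT$-invariant prime Weil divisor $Z\subseteq\tX$ gives a $\TT$-invariant divisorial valuation $v_Z$ of $\KK(X)=\KK(\tX)$, with $v_Z\geq 0$ on $A=A[Y,\DD]$ because $A=H^0(\tX,\OO_{\tX})$. Then $Z$ is not contracted by $\varphi$ if and only if its image, which is the closed set $\overline{\varphi(Z)}=\spec(A/\mathfrak p_Z)$ with $\mathfrak p_Z=\{a\in A:v_Z(a)>0\}$, has codimension one, that is $\dim(A/\mathfrak p_Z)=\dim X-1$. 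As $v_Z$ is additive, $\mathfrak p_Z$ is a homogeneous prime and $A/\mathfrak p_Z$ is graded, so it suffices to identify this ring and compute its dimension in the two cases.

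First I would write $v_Z$ on a homogeneous element $f\chi^m$, $f\in\KK(Y)^*$, $m\in M$, using the orbit description of $\tX$ in Proposition 7.10 and Corollary 7.11 of \cite{AlHa06}: for a horizontal divisor $Z=Z_\rho$ one gets $v_{Z_\rho}(f\chi^m)=\langle m,n_\rho\rangle$, with $n_\rho$ the primitive generator of $\rho$ and independently of $f$; for a vertical divisor $Z=Z_{D,p}$ one gets $v_{Z_{D,p}}(f\chi^m)=\mu\bigl(\ord_D f+\langle m,p\rangle\bigr)$ for a suitable $\mu\in\ZZ_{>0}$. Since $v_Z$ is nonnegative and additive on $A$, these formulas pin down $\mathfrak p_Z$ degree by degree. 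In the horizontal case, with $\tau=\sigma^\vee\cap\rho^\perp$ the dual codimension one face, one finds $(\mathfrak p_{Z_\rho})_m=0$ for $m\in\tau\cap M$ and $(\mathfrak p_{Z_\rho})_m=A_m$ otherwise, whence
$$A/\mathfrak p_{Z_\rho}\;\cong\;\bigoplus_{m\in\tau\cap M}A_m\;=\;\bigoplus_{m\in\tau\cap M}H^0\bigl(Y,\OO_Y(\DD(m))\bigr).$$
In the vertical case the degree-$m$ part of $A/\mathfrak p_{Z_{D,p}}$ vanishes unless $m\in(\Delta_D-p)^\vee$ (equivalently $h_D(m)=\langle m,p\rangle$) and $\langle m,p\rangle\in\ZZ$, in which case it is the space of ``leading coefficients along $D$'' of the sections of $\OO_Y(\DD(m))$, which maps naturally into $H^0\bigl(D,\OO_D(\DD(m)|_D)\bigr)$.

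Then comes the dimension count. In the horizontal case $\tau$ is a facet of the full-dimensional cone $\sigma^\vee$, so $\dim\tau=\rank M-1$, and the effective $(\tau\cap M)$-grading makes $\spec(A/\mathfrak p_{Z_\rho})$ a $\TT'$-variety for a subtorus $\TT'\subseteq\TT$ of rank $\rank M-1$; hence $\dim(A/\mathfrak p_{Z_\rho})=(\rank M-1)+\operatorname{trdeg}_\KK K$, where $K\subseteq\KK(Y)$ is the subfield generated by the ratios $f/g$ with $f,g\in H^0(Y,\OO_Y(\DD(m)))$, $m\in\tau\cap M$. Comparing with $\dim X=\dim Y+\rank M$, non-contraction is equivalent to $\operatorname{trdeg}_\KK K=\dim Y$; and since $H^0(Y,\OO_Y(\DD(m)))\cdot H^0(Y,\OO_Y(\DD(m')))\subseteq H^0(Y,\OO_Y(\DD(m+m')))$ by the convexity of the support functions, $K$ is a directed union of the subfields attached to single degrees $m$, so $\operatorname{trdeg}_\KK K=\dim Y$ exactly when $\DD(m)$ is big for some $m\in\tau\cap M$; by the concavity of $m\mapsto\DD(m)$ together with the semiampleness of the evaluations on $\sigma^\vee$, this is in turn equivalent to $\DD(m)$ being big for all $m\in\relint(\tau)$, which is (i). Part (ii) is entirely parallel: $(\Delta_D-p)^\vee$ is full-dimensional in $M_\QQ$ because $p$ is a vertex of $\Delta_D$, so $\spec(A/\mathfrak p_{Z_{D,p}})$ carries an effective action of all of $\TT$ and $\dim(A/\mathfrak p_{Z_{D,p}})=\rank M+\operatorname{trdeg}_\KK K_D$ with $K_D\subseteq\KK(D)$ generated by the ratios of leading coefficients of sections of a common $\OO_Y(\DD(m))$; non-contraction then amounts to $\operatorname{trdeg}_\KK K_D=\dim D=\dim Y-1$, i.e. to $\DD(m)|_D$ being big for all $m\in\relint\bigl((\Delta_D-p)^\vee\bigr)$.

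The delicate part is this last step, and in particular two technical points: in case (ii) one must check that the ``leading coefficient'' map $A_m\to H^0(D,\OO_D(\DD(m)|_D))$ has image large enough that the induced rational map to $D$ is dominant exactly when $\DD(m)|_D$ is big, and one must set up the analogue of the concavity argument for the restricted divisors $\DD(m)|_D$ on $D$. These are precisely the issues resolved in the proof of Proposition 3.13 of \cite{PeSu08}, which one here has to carry out over a base variety $Y$ of arbitrary dimension rather than over a curve.
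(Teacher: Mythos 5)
Your argument is sound, but be aware that the paper itself offers no proof of this lemma: it is introduced as a reformulation of Proposition 3.13 of \cite{PeSu08} and left uncited-in-detail, so what you have written is essentially a reconstruction (and arbitrary-complexity generalization) of the argument of that reference rather than an alternative to anything in the paper. Your skeleton --- replace ``not contracted'' by ``the center $V(\mathfrak{p}_Z)$ of the divisorial valuation $v_Z$ has codimension one'', compute $v_Z$ on semi-invariants via the orbit decomposition of \cite{AlHa06}, identify $A/\mathfrak{p}_Z$ degree by degree, and count dimensions via $\dim=\rank(\text{weight group})+\operatorname{trdeg}$ of the degree-zero part of the fraction field --- is exactly the right one. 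The two points you flag as delicate do close up, and in both cases it is condition (i) of properness (semiampleness of the evaluations) that does the work. For the passage from ``$\DD(m_0)$ big for some $m_0\in\tau\cap M$'' to ``big on all of $\relint(\tau)$'': for $m\in\relint(\tau)$ and $\lambda\gg0$ one has $\lambda m-m_0\in\tau$, and $\DD(\lambda m)-\DD(m_0)-\DD(\lambda m-m_0)$ is effective by upper convexity of the support functions, so $\DD(\lambda m)$ is a sum of a big, a semiample and an effective divisor, hence big; the same argument restricts to $D$ in case (ii). For the image of the leading-coefficient map: choosing $r$ so that $r\DD(m)$ is integral and base-point free, the rational map on $D$ defined by the image of $H^0(Y,\OO_Y(r\DD(m)))\to H^0(D,\OO_D(r\DD(m)|_D))$ is the restriction to $D$ of the morphism $\phi$ given by $|r\DD(m)|$, and since $r\DD(m)|_D=(\phi|_D)^*(\text{ample})$ this restriction is generically finite precisely when $\DD(m)|_D$ is big, which is what your transcendence-degree count requires. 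The only step you should make explicit is why the weights $m$ with $(A/\mathfrak{p}_Z)_m\neq0$ generate a subgroup of the expected rank ($\rank M-1$, resp.\ $\rank M$); again semiampleness gives $A_{rm}\neq0$ for some $r>0$ and every $m\in\sigma_M^\vee$, which settles it.
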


\begin{corollary} \label{Tiso1}
The morphism $\varphi:\tX\rightarrow X$ is an isomorphism in codimension 1 if and only if the following conditions hold.
\begin{enumerate}[$(i)$]
\item For every codimension 1 face $\tau\subseteq\sigma^\vee$, the divisor $\DD(m)$ is big for all $m\in\relint(\tau)$.
\item For every prime Weil divisor $D$ on $Y$ and every vertex on $\Delta_D$, the divisor $\DD(m)|_Z$ is big for all $m\in\relint((\Delta_D-p)^\vee)$. 
\end{enumerate}
\end{corollary}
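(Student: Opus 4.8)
The plan is to deduce the statement directly from the two lemmas above, together with the observation that for the proper birational equivariant contraction $\varphi:\tX\to X$ the property of being an isomorphism in codimension $1$ can be tested on $\TT$-invariant divisors alone.

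First I would record the reduction step. Since $\varphi$ is proper and birational between normal varieties, it is an isomorphism in codimension $1$ if and only if it contracts no prime divisor of $\tX$: a prime divisor of $\tX$ that is not contracted dominates a prime divisor of $X$, and $\varphi$ then induces a local inclusion of the two associated discrete valuation rings inside the common function field, which is forced to be an equality (any ring strictly between a DVR and its fraction field is the whole fraction field). Moreover the set of prime divisors of $\tX$ contracted by $\varphi$ is stable under the $\TT$-action and finite, hence fixed pointwise because $\TT$ is connected; so $\varphi$ contracts a prime divisor if and only if it contracts a $\TT$-invariant prime divisor. Therefore $\varphi$ is an isomorphism in codimension $1$ if and only if no $\TT$-invariant prime Weil divisor of $\tX$ is contracted by $\varphi$.

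Then I would run through the classification of $\TT$-invariant prime Weil divisors of $\tX$ provided by the lemma preceding Lemma~\ref{Tcont}: they split into the horizontal ones, indexed by the extremal rays $\rho$ of $\sigma$, which under the order-reversing face correspondence are in bijection with the codimension $1$ faces $\tau$ of $\sigma^\vee$, and the vertical ones, indexed by pairs $(D,p)$ with $D$ a prime divisor on $Y$ and $p$ a vertex of $\Delta_D$. For each type Lemma~\ref{Tcont} gives the non-contraction criterion: the horizontal divisor attached to $\tau$ is not contracted if and only if $\DD(m)$ is big for all $m\in\relint(\tau)$, and the vertical divisor attached to $(D,p)$ is not contracted if and only if $\DD(m)|_D$ is big for all $m\in\relint((\Delta_D-p)^\vee)$. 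Demanding that all horizontal divisors and all vertical divisors be non-contracted is precisely conditions $(i)$ and $(ii)$, and combining this with the reduction above gives the corollary.

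There is no real obstacle here, since all the geometric content is already packaged in the preceding lemmas; the only mildly delicate point is the reduction of the second paragraph, namely that a proper birational morphism of normal varieties is an isomorphism in codimension $1$ exactly when it contracts no prime divisor, and that by equivariance of $\varphi$ it is enough to test this on the $\TT$-invariant prime divisors, which are exactly the ones enumerated by the two lemmas.
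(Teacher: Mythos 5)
Your proposal is correct and follows essentially the same route as the paper: reduce the statement to the assertion that no $\TT$-invariant prime Weil divisor of $\tX$ is contracted by $\varphi$, and then read off conditions $(i)$ and $(ii)$ from the classification of invariant divisors into horizontal and vertical type together with Lemma~\ref{Tcont}. The only difference is that you spell out the reduction (no contracted divisor $\Leftrightarrow$ isomorphism in codimension $1$, and invariance of the finitely many contracted divisors under the connected torus), which the paper leaves implicit.
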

\begin{proof}
To prove that the equivariant contraction $\varphi:\tX\rightarrow X$ is an isomorphism in codimension 1 we only need to prove that no $\TT$-invariant Weil divisor is contracted by $\varphi$. The first condition ensures that no divisor of horizontal type is contracted and the second condition ensures that no divisor of vertical type is contracted.
\end{proof}

\begin{remark}
In the case of a complexity 1 $\TT$-action i.e., when $Y$ is a smooth curve, the condition $(ii)$ in Lemma \ref{Tcont} and Corollary \ref{Tiso1} is trivially verified.
\end{remark}

For one of our applications we need the following lemma concerning the Picard group of $\TT$-varieties, see Proposition 3.1 in \cite{PeSu08} for a particular case.

\begin{lemma} \label{spic}
Let $X=\spec A[Y,\DD]$, where $\DD$ is a proper $\sigma$-polyhedral divisor on a normal semiprojective variety $Y$. If $Y$ is projective then $\pic(X)$ is trivial.
\end{lemma}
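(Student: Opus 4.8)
The plan is to exploit that, when $Y$ is projective, $X=\spec A[Y,\DD]$ is an ``affine cone'': a suitable one--parameter subgroup of $\TT$ contracts $X$ onto a single $\TT$--fixed point, and for such a variety the vanishing of the Picard group follows from $\AF^1$--homotopy invariance of $\pic$.

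The hypothesis is used only through the equality $A_0=H^0(Y,\OO_Y)=\KK$, which holds because $Y$ is projective and irreducible. First I would reduce to the case where $\sigma$ is of full dimension in $N_\QQ$, equivalently $\sigma^\vee\subseteq M_\QQ$ is pointed. If it is not, the lineality space $\sigma^\perp$ of $\sigma^\vee$ records a split subtorus of $\TT$; replacing $X$ by the quotient by a one--parameter subgroup of $\TT$ lying in $\relint(\sigma)$ (equivalently, by $\spec$ of the ring of sections in the degrees belonging to $\sigma^\perp$) yields a $\TT$--variety of the same kind but of smaller torus rank and with the same Picard group, so one finishes by induction on $\rank M$, using $\pic(Z\times\KK^*)=\pic(Z)$ for normal $Z$. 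This reduction is the point requiring the most care.

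Assuming now $\sigma$ full--dimensional, I would choose $\lambda\in\relint(\sigma)\cap N$ and pass to the coarser $\ZZ$--grading $A=\bigoplus_{d\ge0}A^{(d)}$ with $A^{(d)}=\bigoplus_{\langle m,\lambda\rangle=d}A_m$. This grading is nonnegative since $\lambda\in\sigma$, and $A^{(0)}=A_0=\KK$: for $m\in\sigma_M^\vee$ with $\langle m,\lambda\rangle=0$ and $\lambda$ in the relative interior of $\sigma$, one gets $m\in\sigma^\perp=\{0\}$. Hence $A$ is a finitely generated $\KK$--algebra, nonnegatively graded, with $A^{(0)}=\KK$, and the assignment $a\mapsto t^{\langle m,\lambda\rangle}a$ for $a\in A_m$ defines a $\KK$--algebra homomorphism $A\to A[t]$, hence a morphism $\mu\colon\AF^1\times X\to X$ extending the $\KK^*$--action given by $\lambda$, with $\mu(1,\cdot)=\mathrm{id}_X$ and $\mu(0,\cdot)$ the constant map onto the point $x_0$ cut out by $\bigoplus_{d>0}A^{(d)}$.

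To conclude, let $L\in\pic(X)$. As $X$ is normal, pullback along the projection $p\colon\AF^1\times X\to X$ is an isomorphism of Picard groups, with restriction to any slice $\{c\}\times X$ as inverse. Restricting $\mu^*L$ to $\{1\}\times X$ identifies it with $p^*L$; restricting $p^*L$ to $\{0\}\times X$ returns $L$, whereas restricting $\mu^*L$ to $\{0\}\times X$ yields $\mu(0,\cdot)^*L\cong\OO_X$ because that morphism factors through a point. Therefore $L\cong\OO_X$, so $\pic(X)$ is trivial. Besides the reduction step, the only non--formal input is the $\AF^1$--homotopy invariance of the Picard group of a normal variety, which is standard.
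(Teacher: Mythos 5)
Your argument is correct in its core and takes a genuinely different route from the paper. The paper's proof is a two-line equivariance argument: a Cartier divisor may be chosen $\TT$-invariant together with a $\TT$-invariant open set $U\ni\bar{0}$ on which it has a local equation, and since $\bar{0}$ is an attractive fixed point every invariant open set containing it is all of $X$, so the local equation is global. You instead run the standard ``affine cone'' argument: the contracting $\KK^*$-action extends to $\mu:\AF^1\times X\to X$ with $\mu(0,\cdot)$ constant, and $\AF^1$-homotopy invariance of $\pic$ for normal (hence seminormal) varieties forces every line bundle to be trivial. Both proofs ultimately rest on the same geometric input (projectivity of $Y$ gives $A_0=\KK$ and a cone structure on $X$); yours trades the paper's appeal to equivariant trivializations of Cartier divisors for Traverso's theorem, which is a heavier but equally standard tool, and has the virtue of not needing to know that $U=X$, only that the cone point exists.

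One caveat: your reduction to the case of full-dimensional $\sigma$ is not correct as written. The quotient of $X$ by a one-parameter subgroup $\lambda\in\relint(\sigma)$, i.e.\ $\spec A^{(0)}=\spec\bigoplus_{m\in\sigma^\perp\cap M}A_m$, is essentially the torus $\spec\KK[M\cap\sigma^\perp]$ (or a finite quotient thereof), not ``a $\TT$-variety of the same kind of smaller torus rank with the same Picard group.'' What you actually need is a product decomposition $X\cong X_0\times(\KK^*)^k$ splitting off the units $\chi^{m'}$, $m'$ in the lineality space of $\sigma^\vee$; this requires an argument (one must check that $A_{m'}$ is spanned by a unit, which uses that $\DD(m')$ and $\DD(-m')$ are semiample with $\DD(m')+\DD(-m')\le 0$ on a projective $Y$, and is slightly delicate when $\DD(m')$ is non-integral), after which $\pic(X_0\times(\KK^*)^k)\cong\pic(X_0)$ finishes the induction. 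To be fair, the paper's own proof has the same implicit hypothesis, since the attractive fixed point $\bar{0}$ only exists when $\sigma^\vee$ is pointed; in all of the paper's applications ($\KK^*$-surfaces of elliptic type, where $\sigma=\QQ_{\ge0}$) this holds automatically, and in that setting your proof is complete.
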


\begin{proof}
Let $D$ be a Cartier divisor on $X$, and let $f$ be a local equation of $D$ in an open set $U\subseteq X$ containing $\bar{0}$. By \cite[\S 1, Exercise 16]{Bou65} we may assume that $D$ and $U$ are $\TT$-invariant. Since $\bar{0}$ is an attractive fixed point, every $\TT$-orbit closure contains $\bar{0}$ and so $U=X$, proving the lemma. 
\end{proof}

\subsection{Smooth polyhedral divisors}

The combinatorial description in Theorem \ref{AH} is not unique. The following Lemma is a specialization of Corollary 8.12 in \cite{AlHa06}. For the convenience of the reader, we provide a short argument.

\begin{lemma}\label{proj}
Let $\DD$ be a proper $\sigma$-polyhedral divisor on a normal semiprojective variety $Y$. Then for any projective birational morphism $\psi:Z\rightarrow Y$ the variety $\spec A[Y,\DD]$ is equivariantly isomorphic to $\spec A[Z,\psi^*\DD]$.
\end{lemma}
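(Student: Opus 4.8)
The plan is to show that the graded algebra $A[Y,\DD]$ is left unchanged when we pull $\DD$ back along a projective birational morphism $\psi\colon Z\to Y$, by comparing the two algebras degree by degree. For each $m\in\sigma_M^\vee$ the $m$-graded piece of $A[Y,\DD]$ is $H^0(Y,\OO_Y(\DD(m)))$, while the corresponding piece of $A[Z,\psi^*\DD]$ is $H^0(Z,\OO_Z(\psi^*\DD(m)))$. Since evaluation commutes with pullback, $(\psi^*\DD)(m)=\psi^*(\DD(m))$, so it suffices to prove
$$H^0\!\left(Y,\OO_Y(\DD(m))\right)\;=\;H^0\!\left(Z,\OO_Z(\lfloor\psi^*\DD(m)\rfloor)\right)$$
for every $m\in\sigma_M^\vee$, compatibly with the multiplication maps, so that the identification is an isomorphism of $M$-graded $\KK$-algebras; the equivariant isomorphism of affine varieties then follows by taking $\spec$.

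First I would reduce to a statement about reflexive sheaves. Because $\psi$ is birational and $Z,Y$ are normal, $\psi_*\OO_Z=\OO_Y$ and more generally for a $\QQ$-Cartier, hence in particular for any Weil divisor $E$ on $Y$ one has $H^0(Z,\OO_Z(\psi^*E))=H^0(Y,\OO_Y(E))$: a rational function on $Y$ lies in $H^0(Y,\OO_Y(E))$ iff its divisor plus $E$ is effective, and pulling back by the birational $\psi$ neither creates nor destroys this effectivity because the pullback of an effective divisor is effective and $\psi$ is an isomorphism over a dense open set. The only subtlety is the rounding: $\lfloor\psi^*D\rfloor$ need not equal $\psi^*\lfloor D\rfloor$ for a $\QQ$-divisor $D$. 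Here one uses that $\psi^*D-\psi^*\lfloor D\rfloor=\psi^*(\{D\})$ has all coefficients in $[0,1)$ on components dominating $Y$, but may acquire larger coefficients on $\psi$-exceptional divisors; however exceptional divisors have positive discrepancy for the \emph{fractional} part in a controlled way, and in any case a section of $\OO_Z(\lfloor\psi^*D\rfloor)$ is a rational function $f$ with $\divi(f)+\lfloor\psi^*D\rfloor\geq0$, which pushes down to $\divi(f)+D\geq0$ on $Y$ since $\psi_*$ of an effective divisor is effective and $\psi_*\lfloor\psi^*D\rfloor\le D$; conversely any $f$ with $\divi(f)+\lfloor D\rfloor\ge0$ on $Y$ pulls back to $\divi(\psi^*f)+\psi^*\lfloor D\rfloor\ge0$, and the exceptional part of $\psi^*\lfloor D\rfloor-\lfloor\psi^*D\rfloor$ can only help since we may subtract an effective exceptional divisor. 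This double inclusion gives the equality of global sections.

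The remaining points are bookkeeping: one checks that $\psi^*\DD=\sum_{D}\Delta_D\cdot\psi^*D$ is again a proper $\sigma$-polyhedral divisor on $Z$ — properness of the evaluations is preserved because semiampleness and bigness of a $\QQ$-Cartier divisor are preserved under pullback by a projective birational morphism, and $Z$ is again normal and semiprojective since $\psi$ is projective and $Y$ is projective over an affine — so that Theorem \ref{AH} applies and $\spec A[Z,\psi^*\DD]$ genuinely is of the expected type. Then the degreewise equalities assemble into an isomorphism $A[Y,\DD]\xrightarrow{\ \sim\ }A[Z,\psi^*\DD]$ of $M$-graded algebras, because the multiplication $H^0(\OO(\DD(m)))\otimes H^0(\OO(\DD(m')))\to H^0(\OO(\DD(m+m')))$ is just multiplication of rational functions on $\fract(A)$ in both cases and the identifications above are literally "the same rational function." Taking $\spec$ yields the $\TT$-equivariant isomorphism. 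The main obstacle is the rounding issue with $\lfloor\psi^*\DD(m)\rfloor$ versus $\psi^*\lfloor\DD(m)\rfloor$ on exceptional divisors; I expect this to be handled cleanly by the push-forward/pull-back effectivity argument above rather than by any explicit discrepancy computation.
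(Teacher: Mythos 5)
Your overall strategy is the same as the paper's (compare the two algebras degree by degree, i.e.\ show $H^0(Y,\OO_Y(\DD(m)))=H^0(Z,\OO_Z(\psi^*\DD(m)))$ for all $m\in\sigma_M^\vee$, the graded multiplication being multiplication in the common function field), but the mechanism for handling the fractional coefficients is genuinely different. The paper never rounds a pulled-back $\QQ$-divisor: it picks $r$ with $r\DD(m)$ Cartier, uses the characterization $H^0(Y,\OO_Y(\DD(m)))=\{f\in k(Y):f^r\in H^0(Y,r\DD(m))\}$ on the normal variety $Y$, and then transfers this by Zariski's main theorem ($\psi_*\OO_Z=\OO_Y$) and the projection formula applied to the honest Cartier divisor $r\DD(m)$. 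You instead argue directly with orders of vanishing and effectivity under $\psi_*$ and $\psi^*$, which is more elementary and perfectly viable; your push-forward direction ($\divi_Z(f)+\lfloor\psi^*\DD(m)\rfloor\geq 0$ implies $\divi_Y(f)+\DD(m)\geq 0$, hence $\divi_Y(f)+\lfloor\DD(m)\rfloor\geq 0$ by integrality of $\divi_Y(f)$) is correct as written.

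The converse direction, however, needs repair: you pull back $\lfloor\DD(m)\rfloor$, but the round-down of a $\QQ$-Cartier divisor is in general only a Weil divisor, so $\psi^*\lfloor\DD(m)\rfloor$ is not defined, and the accompanying remark about discrepancies of the fractional part along exceptional divisors is not doing any work. The fix is immediate and stays within your framework: from $\divi_Y(f)+\lfloor\DD(m)\rfloor\geq 0$ deduce $\divi_Y(f)+\DD(m)\geq 0$, pull back this effective $\QQ$-Cartier divisor (here properness of $\DD$ is what guarantees $\DD(m)$ is $\QQ$-Cartier, exactly as it guarantees the existence of $r$ in the paper's proof) to get $\divi_Z(f)+\psi^*\DD(m)\geq 0$, and round down using integrality of $\divi_Z(f)$. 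With that substitution your argument is complete, and it buys a proof that avoids the projection formula at the cost of the explicit divisor bookkeeping; the paper's $r$-th power trick buys brevity and sidesteps the rounding question altogether.
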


\begin{proof} 
We only need to show that 
$$H^0(Y,\OO_Y(\DD(m)))\simeq H^0(Z,\OO_Z(\psi^*\DD(m))),\mbox{ for all }m\in\sigma^\vee_M\,.$$
Letting $r$ be such that $r\DD(m)$ is Cartier $\forall m\in\sigma_M^\vee$, we have
$$H^0(Y,\OO_Y(\DD(m)))=\{f\in k(Y): f^r\in H^0(Y,r\DD(m))\},\quad \forall m\in\sigma_M^\vee\,.$$
Since $Y$ is normal and $\psi$ is projective, by Zariski main theorem $\psi_*\OO_Z=\OO_Y$ and by the projection formula, for all $m\in\sigma_M^\vee$ we have
\begin{align*}
H^0(Y,\OO_Y(\DD(m)))&\simeq\big\{f\in k(Z): f^r\in H^0(Z,\OO_Z(\psi^*r\DD(m)))\big\} \\
                    &=H^0(Z,\OO_Z(\psi^*\DD(m)))\,.
\end{align*}
This completes the proof.
\end{proof}

\begin{remark}
In the previous Lemma, $\tX=\spec\tA[Y,\DD]$ is not equivariantly isomorphic to $\spec\tA[Z,\psi^*\DD]$, unless $\psi$ is an isomorphism.
\end{remark}

To restrict further the class of $\sigma$-polyhedral divisor we introduce the following notation.

\begin{definition}
We say that a $\sigma$-polyhedral divisor $\DD$ on a normal semiprojective variety $Y$ is smooth if $Y$ is smooth, $\DD$ is proper, and $\supp\DD$ is SNC.
\end{definition}

\begin{remark} \label{smooth1}
In the case of complexity one i.e., when $Y$ is a curve, any proper $\sigma$-polyhedral divisor is smooth. Indeed, any normal curve is smooth and any divisor on a smooth curve is SNC.
\end{remark}

\begin{corollary}
For any  $\TT$-variety $X$ there exists  a smooth $\sigma$-polyhedral divisor on a smooth semiprojective variety $Y$ such that $X=\spec A[Y,\DD]$.
\end{corollary}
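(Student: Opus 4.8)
"For any $\TT$-variety $X$ there exists a smooth $\sigma$-polyhedral divisor on a smooth semiprojective variety $Y$ such that $X=\spec A[Y,\DD]$."

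The plan is to combine the Altmann–Hausen presentation (Theorem \ref{AH}) with the invariance under pullbacks (Lemma \ref{proj}), using Hironaka's resolution of singularities. The argument is essentially a bookkeeping chain, and the main point is to check that each step preserves the hypotheses needed for the next.

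First I would invoke the converse part of Theorem \ref{AH}: since $X$ is a normal affine variety with an effective $\TT$-action, there exists a quasiprojective — in fact we may take it semiprojective, as $X$ is affine — variety $Y_0$ and a proper $\sigma$-polyhedral divisor $\DD_0$ on $Y_0$ with $X=\spec A[Y_0,\DD_0]$. Next, by Hironaka's theorem on resolution of singularities in characteristic zero (recall $\KK$ is algebraically closed of characteristic $0$), there is a projective birational morphism $\psi:Y\to Y_0$ with $Y$ smooth; moreover, one may arrange by further blow-ups that the support of $\psi^*\DD_0$ together with the exceptional locus of $\psi$ is a simple normal crossing divisor on $Y$. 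Set $\DD=\psi^*\DD_0$, that is, $\DD=\sum_D \Delta_{\psi(D)}\cdot D$ where the coefficient of a prime divisor $D$ on $Y$ is $\Delta_{D'}$ with $D'$ the image of $D$ when $D$ dominates a prime divisor $D'$ on $Y_0$, and $\sigma$ otherwise (so in particular the exceptional prime divisors carry coefficient $\sigma$).

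I would then verify the three defining conditions of a smooth $\sigma$-polyhedral divisor. The variety $Y$ is smooth by construction, and it remains semiprojective: $Y$ is projective over $Y_0$ (since $\psi$ is projective) and $Y_0$ is projective over an affine variety (in fact over $X$), so $Y$ is projective over an affine variety. The support $\supp\DD$ is SNC by the choice of resolution above. For properness, one notes that $\psi^*\DD_0(m)=\psi^*(\DD_0(m))$ is the pullback along a projective birational morphism of a semiample (resp. big) $\QQ$-Cartier divisor: semiampleness and $\QQ$-Cartier-ness are preserved under arbitrary pullback, and bigness is preserved under pullback along birational morphisms. Hence $\DD$ is proper. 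Finally, Lemma \ref{proj} gives $\spec A[Y_0,\DD_0]\cong \spec A[Y,\psi^*\DD_0]=\spec A[Y,\DD]$ equivariantly, so $X=\spec A[Y,\DD]$, as desired.

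The only genuine subtlety — and hence the step I would spell out with a little care — is arranging simultaneously that $Y$ is smooth and that $\supp\DD$ is SNC; this is exactly what Hironaka's theorem in its strong form (embedded resolution, giving SNC for a prescribed closed subset) provides, applied to the union of the strict transform of $\supp\DD_0$ and the exceptional divisors. Everything else is routine: the preservation of semiampleness, $\QQ$-Cartier-ness, and bigness under the relevant pullbacks, and the semiprojectivity of a tower of projective morphisms over an affine base. In complexity one this corollary is already contained in Remark \ref{smooth1}, so the content is in the higher-dimensional base case.
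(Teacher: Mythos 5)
Your overall route is exactly the paper's: produce a proper $\sigma$-polyhedral divisor $(Y_0,\DD_0)$ from Theorem \ref{AH}, resolve $Y_0$ so that the relevant support becomes SNC, pull the polyhedral divisor back, and conclude with Lemma \ref{proj} (the paper secures semiprojectivity of the resolution via Chow's Lemma, you do it by taking a projective resolution; either is fine). The problem is your explicit description of $\DD=\psi^*\DD_0$, which is wrong and, taken literally, breaks the two steps that follow. The pullback used in Lemma \ref{proj} is the one satisfying $(\psi^*\DD_0)(m)=\psi^*(\DD_0(m))$ for all $m\in\sigma^\vee_M$; concretely, if $\DD_0=\sum_{D'}\Delta_{D'}\cdot D'$, then an exceptional prime divisor $E$ of $\psi$ must carry the Minkowski combination $\sum_{D'}\ord_E(\psi^*D')\cdot\Delta_{D'}$ (plus $\sigma$), which is in general not the trivial coefficient $\sigma$. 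Assigning $\sigma$ to every exceptional divisor, as you do, replaces $\psi^*(\DD_0(m))$ by the strict transform of $\DD_0(m)$, and then: (a) the identity $\DD(m)=\psi^*(\DD_0(m))$ that you invoke to transport semiampleness and bigness is false, and the strict transform of a semiample divisor need not be semiample; (b) the projection-formula argument of Lemma \ref{proj} no longer applies, and indeed $H^0(Y,\OO_Y(\DD(m)))$ can be strictly smaller than $H^0(Y_0,\OO_{Y_0}(\DD_0(m)))$. For instance, blow up a point lying on a prime divisor $D'$ with $h_{D'}(m)=1$ and take $f$ with a pole of order $1$ along $D'$ through that point: then $f$ is a section downstairs, but $\ord_E(\divi_Y f)=-1$ while your coefficient at $E$ evaluates to $0$, so $f$ is lost upstairs. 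With your coefficients one only gets an inclusion $A[Y,\DD]\subseteq A[Y_0,\DD_0]$, not the required equality $X=\spec A[Y,\DD]$.

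The repair is immediate: define $\DD$ as the genuine pullback $\psi^*\DD_0$ (equivalently, decree its evaluations to be $\psi^*(\DD_0(m))$), after which your verification of smoothness, semiprojectivity, properness and the application of Lemma \ref{proj} go through exactly as in the paper.
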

\begin{proof}
By Theorem \ref{AH}, there exists a proper $\sigma$-polyhedral divisor $\DD'$ on a normal semiprojective variety $Y'$ such that $X=\spec A[Y,\DD]$. Let $\psi:Y\rightarrow Y'$ be a resolution of singularities of $Y$ such that $\supp\DD'$ is SNC. By Chow Lemma we can assume that $Y$ is semiprojective. By Lemma \ref{proj}, $\DD=\psi^*\DD'$ is a smooth $\sigma$-polyhedral divisor such that $X=\spec A[Y,\DD]$.
\end{proof}

In the sequel, unless the converse is explicitly stated, we restrict to smooth $\sigma$-polyhedral divisors.


\section{Singularities of $\TT$-varieties}

In this section we elaborate a method to effectively compute an equivariant partial resolution of singularities of an affine $\TT$-variety in terms of the combinatorial data $(Y,\DD)$. As a consequence, we compute the higher direct image of the structure sheaf for any resolution of singularities, and we give a criterion as to when a $\TT$-variety has rational singularities.

A key ingredient for our results is the following example (cf. Example 3.19 in \cite{Lie08}).

\begin{example} \label{extor}
Let $H_i$, $i\in\{1,\ldots,n\}$ be the coordinate hyperplanes in $Y=\AF^n$, and let a smooth divisor $\DD$ on $Y$ given by 
$$\DD=\sum_{i=0}^{n}\Delta_i\cdot H_i,\quad \mbox{where } \Delta_i\in\pol_\sigma(N_{\QQ})\,.$$
Letting $h_i=h_{\Delta_i}$ be the support function of $\Delta_i$ and $k(Y)=k(t_1,\ldots, t_n)$, we have
\begin{align*}
H^0(Y,\OO_Y(\DD(m)))&=\big\{f\in k(Y):\divi(f)+\DD(m)\geq 0\big\} \\
                    &=\left\{f\in k(Y):\divi(f)+\sum_{i=1}^n h_i(m)\cdot H_i\geq 0\right\} \\
                    &=\bigoplus_{r_i\geq -h_i(m)}k\cdot t_1^{r_1}\cdots t_n^{r_n}\,.
\end{align*}
Let $\widehat{N}=N \times \ZZ^n$, $\widehat{M}=M\times \ZZ^n$ and $\widehat{\sigma}$ be the cone in $\widehat{N}_{\QQ}$ spanned by $(\sigma,\overline{0})$ and $(\Delta_i,e_i)$, $\forall i\in\{1,\ldots,n\}$, where $e_i$ is the $i$-th vector in the standard base of $\QQ^n$. A vector $(m,r)\in M'$ belongs to the dual cone $(\sigma')^\vee$ if and only if $m\in\sigma^\vee$ and $r_i\geq -h_i(m)$.

With this definitions we have
$$A[Y,\DD]=\bigoplus_{m\in\sigma_M^\vee} H^0(Y,\OO_Y(\DD(m)))=\bigoplus_{(m,r) \in\widehat{\sigma}_{\widehat{M}}^\vee}k\cdot t_1^{r_1}\cdots t_n^{r_n}\simeq k[\widehat{\sigma}^\vee_{\widehat{M}}]\,.$$

Hence $X=\spec A[Y,\DD]$ is isomorphic as an abstract variety to the toric variety with cone $\widehat{\sigma}\subseteq \widehat{N}_{\QQ}$. Since $Y$ is affine $\tX\simeq X$ is also a toric variety.
\end{example}

\begin{definition}
Recall \cite{KKMS73} that a variety $X$ is toroidal if for every $x\in X$ there is a formal neighborhood isomorphic to a formal neighborhood of a point $y\in U_{\omega}$ in a toric variety.
\end{definition}

\begin{lemma} \label{toroidal}
Let $\DD=\sum_D \Delta_D\cdot D$ be a proper $\sigma$-polyhedral divisor on a semiprojective normal variety $Y$. If $\DD$ is smooth then $\tX=\spec_Y \tA[Y,\DD]$ is a toroidal variety.
\end{lemma}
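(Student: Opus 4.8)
The plan is to verify the toroidal condition locally on $\tX$ by reducing, near any point, to the model computed in Example \ref{extor}. Since the statement is about formal neighborhoods of points of $\tX$, and $\pi\colon\tX\to Y$ is the good quotient, I would first work over a suitable affine open $V\subseteq Y$. Because $\DD$ is smooth, $Y$ is smooth and $\supp\DD$ is SNC; hence each point $y\in Y$ has an affine (even \'etale or formal) neighborhood $V$ in which the components of $\supp\DD$ meeting $y$ become coordinate hyperplanes, i.e.\ $V$ looks like $\AF^n$ with the relevant $D$'s among the coordinate hyperplanes $H_1,\dots,H_n$, and all other components of $\supp\DD$ avoid $V$. (Formally one may need to shrink $V$ so that the remaining $\Delta_D$'s equal $\sigma$ on $V$; this is possible since only finitely many $\Delta_D\neq\sigma$.) Over such $V$ the restriction $\DD|_V$ is exactly a divisor of the shape treated in Example \ref{extor}.

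Next I would identify $\pi^{-1}(V)=\spec_V\tA[Y,\DD]|_V=\spec\big(\bigoplus_{m\in\sigma_M^\vee}H^0(V,\OO_V(\DD(m)))\big)$. By the computation in Example \ref{extor}, with $\widehat N=N\times\ZZ^n$, $\widehat M=M\times\ZZ^n$ and $\widehat\sigma$ the cone generated by $(\sigma,\overline 0)$ and the $(\Delta_i,e_i)$, this algebra is isomorphic to $\KK[\widehat\sigma^\vee_{\widehat M}]$, so $\pi^{-1}(V)$ is the affine toric variety $U_{\widehat\sigma}$ associated to $\widehat\sigma\subseteq\widehat N_{\QQ}$. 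Patching these descriptions as $V$ ranges over such a cover of $Y$ shows that $\tX$ is covered by opens each isomorphic to an affine toric variety; in particular every point of $\tX$ has a (Zariski, hence \emph{a fortiori} formal) neighborhood isomorphic to an open subset $U_{\widehat\sigma}$ of a toric variety, which is precisely the definition of toroidal. One should remark that the \'etale- or formal-local linearization of the SNC divisor may force one to pass to a formal (rather than Zariski) neighborhood of $y\in Y$; this only strengthens the conclusion, since the toroidal definition already allows formal neighborhoods, and the computation of Example \ref{extor} is insensitive to completing along $V$.

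The main obstacle I anticipate is bookkeeping rather than conceptual: one must make sure that shrinking $Y$ to $V$ does not disturb the grading and that $\tA[Y,\DD]|_V=\tA[V,\DD|_V]$, i.e.\ that $H^0(V,\OO_V(\DD(m)))$ is computed correctly after restriction and that the components of $\DD$ not meeting $V$ genuinely contribute nothing (this uses $\Delta_D=\sigma$, so $h_D\equiv 0$, on the part of $\supp\DD$ one discards, after possibly shrinking further). A second, minor point is that the local coordinates straightening the SNC divisor exist only \'etale- or formally-locally on $Y$; I would phrase the argument with formal neighborhoods from the start so that this causes no trouble, invoking that a good quotient behaves well under formal base change along $V\to Y$. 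Once these routine verifications are in place, the toroidal chart is delivered directly by Example \ref{extor}, and the proof is complete.
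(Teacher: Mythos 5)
Your proposal is essentially correct and follows the same route as the paper: reduce locally over $Y$, using smoothness of $Y$ and the SNC condition on $\supp\DD$, to the model of Example \ref{extor}, whose toric description furnishes the toroidal charts. One caveat: your primary phrasing via a Zariski-open $V\subseteq Y$ that ``looks like $\AF^n$'' with the relevant components as coordinate hyperplanes is untenable in general (already for $Y$ a smooth projective curve of positive genus no open subset is isomorphic to an open subset of $\AF^1$), so the Zariski-chart version of the argument, including the claim that $\pi^{-1}(V)$ is the affine toric variety $U_{\widehat\sigma}$, cannot be carried out as stated; only the formal-neighborhood version works. That is precisely the fallback you indicate in your final paragraph, and it is exactly what the paper does: for each $y\in Y$ it compares the formal neighborhood of the fiber $X_y\subseteq\tX$ with the formal neighborhood of the fiber over $0$ for $\spec \tA[\AF^n,\DD_y']\rightarrow\AF^n$, where $\DD_y'=\sum_{D\ni y,\,\Delta_D\neq\sigma}\Delta_D\cdot H_{j(D)}$, and then invokes Example \ref{extor}; so if you rewrite the argument with formal completions from the start (checking that forming $\spec_Y\tA$ is compatible with completion along the fiber), your proof coincides with the paper's.
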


\begin{proof}
For $y\in Y$ we consider the fiber $X_y$ over $y$ for the morphism $\varphi:\tX\rightarrow Y$. We let also $\mathfrak{U}_y$ be a formal neighborhood of $X_y$. 

We let $S_y=\{D\in\operatorname{Wdiv}(Y), y\in D\mbox{ and } \Delta_D\neq\sigma\}$ and $n=\dim Y$. Since $\supp\DD$ is SNC, we have that $\operatorname{card}(S_y)\leq n$. We let $j:S_y\rightarrow \{1,\ldots,n\}$ be any injective function.

We consider the smooth $\sigma$-polyhedral divisor
$$\DD_y'= \sum_{D\in S_y} \Delta_D\cdot H_{j(D)}, \quad \mbox{on} \quad \AF^n\,.$$

Since $Y$ is smooth, $\mathfrak{U}_y$ is isomorphic to a formal neighborhood of the fiber over zero for the morphism $\pi':\spec \tA[\AF^n,\DD_y']\rightarrow \AF^n$ (see Theorem \ref{AH} $(ii)$).

Finally, Example \ref{extor} shows that $\spec \tA[\AF^n,\DD_y']$ is toric for all $y$ and so $X$ is toroidal. This completes the proof.
\end{proof}

\begin{remark}
Since the contraction $\varphi:\tX\rightarrow X=\spec A[Y,\DD]$ in Theorem \ref{AH} is proper and birational, to obtain a desingularization of $X$ it is enough to have a desingularization of $\tX$. If further $\DD$ is smooth, then $\tX$ is toroidal and there exists a toric desingularization.
\end{remark}

\subsection{Rational Singularities}

In the following we use the partial desingularization $\varphi:\tX\rightarrow X$ where $\tX$ is toroidal as in Theorem \ref{AH}. This allows us to provide information about the singularities of $X$ in terms of the combinatorial data $(Y,\DD)$. We recall the following notion.

\begin{definition}
A variety $X$ has rational singularities if there exists a desingularization $\psi:Z\rightarrow X$, such that
$$\psi_*\OO_Z=\OO_X,\quad\mbox{and}\quad R^i\psi_*\OO_Z=0, \quad \forall i>0\,.$$
\end{definition}

\begin{remark} \label{anydes}
This definition is correct since the higher direct image sheaves $R^i\psi_*\OO_Z$ are independent of the particular choice of the desingularization of $X$. The first condition $\psi_*\OO_Z=\OO_X$ is equivalent to $X$ is normal.
\end{remark}

The following well known Lemma, similarly as in \cite{Vieh77}, follows by applying the Leray spectral sequence. For the convenience of the reader we provide a short argument.
\begin{lemma} \label{rati}
Let $\varphi:\tX\rightarrow X$ be a proper surjective, birational morphism, and let $\psi:Z\rightarrow X$ be a desingularization of $X$. If $\tX$ has only rational singularities, then
$$R^i\psi_*\OO_Z=R^i\varphi_*\OO_{\tX}, \quad \forall i\geq0\,.$$
\end{lemma}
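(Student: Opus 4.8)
The plan is to use the Leray spectral sequence for the composition of maps, exploiting the fact that a common desingularization can be found that dominates both $\tX$ and $Z$.

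First I would choose a desingularization $\psi':Z'\to\tX$ of $\tX$, which exists since $\tX$ is a variety over a field of characteristic zero. Because $\tX$ has only rational singularities, we have $\psi'_*\OO_{Z'}=\OO_{\tX}$ and $R^i\psi'_*\OO_{Z'}=0$ for all $i>0$. Composing, $\varphi\circ\psi':Z'\to X$ is a desingularization of $X$ (proper, birational, with $Z'$ smooth). By Remark \ref{anydes}, the higher direct images $R^i\psi_*\OO_Z$ do not depend on the chosen desingularization, so it suffices to compute $R^i(\varphi\circ\psi')_*\OO_{Z'}$.

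Next I would apply the Grothendieck--Leray spectral sequence for the composition $\varphi\circ\psi'$, namely
$$E_2^{p,q}=R^p\varphi_*\left(R^q\psi'_*\OO_{Z'}\right)\ \Longrightarrow\ R^{p+q}(\varphi\circ\psi')_*\OO_{Z'}\,.$$
Since $\tX$ has rational singularities, the only nonvanishing row is $q=0$, where $R^0\psi'_*\OO_{Z'}=\OO_{\tX}$. Hence the spectral sequence degenerates at $E_2$ and yields
$$R^p(\varphi\circ\psi')_*\OO_{Z'}=R^p\varphi_*\OO_{\tX},\quad\forall p\geq 0\,.$$
Combining this with the independence of the desingularization gives $R^i\psi_*\OO_Z=R^i\varphi_*\OO_{\tX}$ for all $i\geq 0$, as claimed. (For $i=0$ one also uses that $X$ is normal, so $\psi_*\OO_Z=\OO_X=\varphi_*\OO_{\tX}$, the last equality because $\varphi$ is proper birational with $X$ normal, by Zariski's main theorem.)

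The only real subtlety, rather than an obstacle, is making sure the invariance statement of Remark \ref{anydes} is applied correctly: $R^i\psi_*\OO_Z$ is independent of $Z$ precisely because any two desingularizations can be dominated by a third, and the rational-singularities hypothesis on $\tX$ is exactly what collapses the spectral sequence to a single row. No delicate estimates are needed; the argument is formal once the right desingularization $Z'\to\tX\to X$ is in place.
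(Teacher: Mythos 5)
Your proof is correct and takes essentially the same route as the paper: factor a desingularization of $X$ through $\tX$ (using Remark \ref{anydes}), and use the rational-singularities hypothesis on $\tX$ to collapse the Leray spectral sequence for the composition. The only cosmetic difference is that you invoke the relative Grothendieck--Leray spectral sequence directly, while the paper first reduces to $X$ affine and applies Leray to global cohomology groups; the substance is identical.
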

\begin{proof}
By Remark \ref{anydes}, we may assume that the desingularization $\psi$ is such that $\psi=\varphi\circ\widetilde{\psi}$, where $\widetilde{\psi}:Z\rightarrow\tX$ is a desingularization of $\tX$. The question is local on $X$, so we may assume that $X$ is affine. Then, by \cite[Ch. III, Prop. 8.5]{Har77} we have\footnote{As usual for a $A$-module $M$, $M^{\sim}$ denontes the associated sheaf on $X=\spec A$.}
$$R^i\psi_*\OO_Z=H^i(Z,\OO_Z)^{\sim}\quad\mbox{and}\quad R^i\varphi_*\OO_{\tX}=H^i(\tX,\OO_{\tX})^{\sim},\quad \forall i\geq 0\,.$$
Since $\tX$ has rational singularities
$$\widetilde{\psi}_*\OO_Z=\OO_{\tX},\quad\mbox{and}\quad R^i\widetilde{\psi}_*\OO_Z=0, \quad \forall i>0\,.$$
By Leray spectral sequence for $(p,q)=(i,0)$ we have
$$H^i(Z,\OO_Z)=H^i(\tX,\widetilde{\psi}_*\OO_Z)=H^i(\tX,\OO_{\tX}),\quad \forall i\geq 0\,,$$
proving the Lemma.
\end{proof}

In the following Theorem, which is our main result, for a $\TT$-variety $X=\spec A[Y,\DD]$ and a desingularization $\psi:Z\rightarrow X$ we provide an expression for $R^i\psi_*\OO_Z$ in terms of the combinatorial data $(Y,\DD)$.

\begin{theorem} \label{Tdirect}
Let $X=\spec A[Y,\DD]$, where $\DD$ is a smooth polyhedral divisor on $Y$. If $\psi:Z\rightarrow X$ is a desingularization, then $R^i\psi_*\OO_Z$ is the sheaf associated to
$$\bigoplus_{m\in\sigma_M^\vee} H^i(Y,\OO_Y(\DD(m)))$$
\end{theorem}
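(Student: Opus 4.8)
The plan is to reduce the computation to a toric/toroidal one on $\tX$ and then to a Čech-type computation on $Y$. First I would invoke Lemma \ref{toroidal}: since $\DD$ is smooth, $\tX=\spec_Y\tA[Y,\DD]$ is toroidal, hence has rational singularities (toroidal varieties, being locally formally toric, are rational by the toric case). Therefore Lemma \ref{rati} applies and gives $R^i\psi_*\OO_Z=R^i\varphi_*\OO_{\tX}$ for all $i\geq 0$, where $\varphi:\tX\to X$ is the Altmann--Hausen contraction. Since the statement is about the sheaf on the affine variety $X=\spec A[Y,\DD]$, by \cite[Ch. III, Prop. 8.5]{Har77} it suffices to identify $H^i(\tX,\OO_{\tX})$ with $\bigoplus_{m\in\sigma_M^\vee}H^i(Y,\OO_Y(\DD(m)))$ as $A$-modules (equivalently, the associated sheaf).

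The second and main step is to compute $H^i(\tX,\OO_{\tX})$ via the good quotient $\pi:\tX\to Y$ of Theorem \ref{AH}(ii). The key point is that $\pi$ is an affine morphism: by construction $\tX=\spec_Y\tA[Y,\DD]$, so $\pi_*\OO_{\tX}=\tA[Y,\DD]=\bigoplus_{m\in\sigma_M^\vee}\OO_Y(\DD(m))$ as a quasi-coherent sheaf of $\OO_Y$-algebras, and higher direct images $R^j\pi_*\OO_{\tX}$ vanish for $j>0$ because $\pi$ is affine. Hence the Leray spectral sequence for $\pi$ degenerates and yields, for every $i$,
$$H^i(\tX,\OO_{\tX})=H^i(Y,\pi_*\OO_{\tX})=H^i\Bigl(Y,\bigoplus_{m\in\sigma_M^\vee}\OO_Y(\DD(m))\Bigr)=\bigoplus_{m\in\sigma_M^\vee}H^i(Y,\OO_Y(\DD(m)))\,,$$
the last equality since cohomology commutes with the (direct sum) grading; here one should note the $M$-grading is preserved throughout, so this is an isomorphism of graded $A$-modules. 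Combining with the first step finishes the proof.

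The step I expect to be the main obstacle is justifying that $\tX$ has rational singularities, i.e. turning "toroidal" into "rational." One must either cite that toroidal embeddings have rational singularities (this is in \cite{KKMS73}, following from the fact that torus-invariant resolutions of toric varieties satisfy $R^i\psi_*\OO=0$, which in turn follows from the combinatorial description of cohomology on toric varieties, e.g. via Demazure vanishing or a direct Čech argument on the affine toric charts), or reprove it: locally formally on $\tX$ we are on an affine toric variety $U_{\widehat\sigma}$, which has a canonical toric resolution by subdividing $\widehat\sigma$, and for that resolution the higher direct images of $\OO$ vanish because each cone of the subdivision maps to $\widehat\sigma$ and $H^{>0}$ of a semigroup algebra's structure sheaf pushed forward is computed by a Čech complex that is exact in positive degrees. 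A secondary, minor point is checking that the good quotient $\pi$ is genuinely affine (immediate from $\tX=\spec_Y\tA[Y,\DD]$) and that the decomposition $\pi_*\OO_{\tX}=\bigoplus_m\OO_Y(\DD(m))$ is one of quasi-coherent $\OO_Y$-modules so that cohomology distributes over the sum; both are formal once the relative $\spec$ description is in hand.
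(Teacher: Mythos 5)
Your proposal is correct and follows essentially the same route as the paper: toroidality of $\tX$ (Lemma \ref{toroidal}) gives rational singularities, Lemma \ref{rati} reduces to $R^i\varphi_*\OO_{\tX}$, affineness of $X$ reduces to $H^i(\tX,\OO_{\tX})$, and affineness of $\pi:\tX\to Y$ (your Leray degeneration is exactly the cited fact \cite[Ch.\ III, Ex.\ 4.1]{Har77}) yields $\bigoplus_{m\in\sigma_M^\vee}H^i(Y,\OO_Y(\DD(m)))$. No essential differences.
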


\begin{proof}
Let $\varphi:\tX\rightarrow X$ be as in Theorem \ref{AH}. By Lemma \ref{toroidal} $\tX$ is toroidal, thus it has only rational singularities. By Lemma \ref{rati} we have
$$R^i\psi_*\OO_Z=R^i\varphi_*\OO_{\tX}, \quad \forall i\geq0\,.$$

Since $X$ is affine,  we have
$$R^i\varphi_*\OO_{\tX}=H^i(\tX,\OO_{\tX})^{\sim}, \quad \forall i\geq0\,,$$
see \cite[Ch. III, Prop. 8.5]{Har77}. Let $\pi:\tX\rightarrow Y$ be the good quotient in Theorem \ref{AH} and let $\tA=\bigoplus_{m\in\sigma_M^\vee} \OO_Y(\DD(m))$ so that $\tX=\spec_Y\tA$. Since the morphism $\pi$ is affine, we have
$$H^i(\tX,\OO_{\tX})= H^i(Y,\tA)= \bigoplus_{m\in\sigma_M^\vee} H^i(Y,\OO_Y(\DD(m))),\quad \forall i\geq0$$
by \cite[Ch III, Ex. 4.1]{Har77}, proving the Theorem.
\end{proof}

As an immediate consequence of Theorem \ref{Tdirect}, in the following theorem, we characterize $\TT$-varieties having rational singularities.

\begin{theorem} \label{Trat}
Let $X=A[Y,\DD]$, where $\DD$ is a smooth $\sigma$-polyhedral divisor on $Y$. Then $X$ has rational singularities if and only if for every $m\in\sigma_M^\vee$
$$H^i(Y,\OO_Y(\DD(m)))=0, \quad \forall i\in \{1,\ldots,\dim Y\}\,.$$
\end{theorem}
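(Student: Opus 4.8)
The plan is to deduce Theorem \ref{Trat} directly from Theorem \ref{Tdirect} together with Remark \ref{anydes}. By Remark \ref{anydes}, $X$ has rational singularities if and only if $X$ is normal (which holds automatically here by Theorem \ref{AH}(i)) and $R^i\psi_*\OO_Z=0$ for all $i>0$, for some --- equivalently any --- desingularization $\psi:Z\rightarrow X$. So the whole statement reduces to translating the vanishing of the higher direct images into the vanishing of the cohomology groups on $Y$.

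First I would fix a desingularization $\psi:Z\rightarrow X$ and invoke Theorem \ref{Tdirect}, which identifies $R^i\psi_*\OO_Z$ with the sheaf on the affine variety $X$ associated to the graded module $\bigoplus_{m\in\sigma_M^\vee} H^i(Y,\OO_Y(\DD(m)))$. Since $X=\spec A[Y,\DD]$ is affine, the functor $M\mapsto M^{\sim}$ from $A[Y,\DD]$-modules to quasicoherent sheaves on $X$ is exact and faithful; in particular $M^{\sim}=0$ if and only if $M=0$. Hence for each fixed $i$, the sheaf $R^i\psi_*\OO_Z$ vanishes if and only if the module $\bigoplus_{m\in\sigma_M^\vee} H^i(Y,\OO_Y(\DD(m)))$ vanishes, and the latter, being a direct sum, vanishes if and only if $H^i(Y,\OO_Y(\DD(m)))=0$ for every $m\in\sigma_M^\vee$.

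Finally I would combine these equivalences over the relevant range of $i$. For $i>\dim Y$ the groups $H^i(Y,\OO_Y(\DD(m)))$ vanish automatically by Grothendieck vanishing, since $Y$ has dimension $\dim Y$; thus the only constraints are for $i\in\{1,\ldots,\dim Y\}$. Therefore $R^i\psi_*\OO_Z=0$ for all $i>0$ if and only if $H^i(Y,\OO_Y(\DD(m)))=0$ for all $m\in\sigma_M^\vee$ and all $i\in\{1,\ldots,\dim Y\}$, which is exactly the asserted criterion.

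This argument is essentially a bookkeeping exercise once Theorem \ref{Tdirect} is in hand, so there is no real obstacle; the only point requiring (minor) care is the reduction of the index range from all $i>0$ to $i\le\dim Y$, which is handled by Grothendieck's vanishing theorem, and the observation that passing to associated sheaves on the affine $X$ neither creates nor destroys nonzero modules.
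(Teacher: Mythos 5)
Your proof is correct and follows essentially the same route as the paper: apply Theorem \ref{Tdirect}, note that normality handles the condition $\psi_*\OO_Z=\OO_X$, observe that the direct sum vanishes if and only if each summand $H^i(Y,\OO_Y(\DD(m)))$ does (with the sheaf--module equivalence on the affine $X$ made explicit, which the paper leaves implicit), and cut the index range down to $i\le\dim Y$ by Grothendieck vanishing.
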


\begin{proof}
Since $X$ is normal, by Theorem \ref{Tdirect} we only have to prove that
$$\bigoplus_{m\in\sigma_M^\vee} H^i(Y,\OO_Y(\DD(m)))=0,\quad \forall i>0$$
This direct sum is trivial if and only if each summand is. Hence $X$ has rational singularities if and only if $H^i(Y,\OO_Y(\DD(m)))=0$, for all $i>0$ and all $m\in\sigma_M^\vee$.

Finally, $H^i(Y,\FF)=0$, for all $i>\dim Y$ and for any sheaf $\FF$, see \cite[Ch III, Th. 2.7]{Har77}. Now the Lemma follows.
\end{proof}

In particular, we have the following Corollary.

\begin{corollary} \label{acyc}
Let $X=A[Y,\DD]$ for some smooth $\sigma$-polyhedral divisor $\DD$ on $Y$. If $X$ has only rational singularities, then the structure sheaf $\OO_Y$ is acyclic i.e., $H^i(Y,\OO_Y)=0$ for all $i>0$.
\end{corollary}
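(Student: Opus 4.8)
The plan is to deduce this immediately from Theorem \ref{Trat} by specializing to the weight $m=0$. First I would observe that $0\in\sigma^\vee_M$ always, since $\sigma^\vee$ is a cone (it contains the origin) and $0\in M$. Next I would compute $\DD(0)=\sum_D h_D(0)\cdot D$; since each support function $h_{\Delta_D}$ is positively homogeneous of degree $1$, we have $h_D(0)=0$ for every $D$, so $\DD(0)=0$ as a $\QQ$-divisor and hence $\OO_Y(\DD(0))=\OO_Y(\lfloor 0\rfloor)=\OO_Y$.

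Then the statement follows directly: by Theorem \ref{Trat}, if $X$ has only rational singularities then $H^i(Y,\OO_Y(\DD(m)))=0$ for all $i\in\{1,\dots,\dim Y\}$ and all $m\in\sigma^\vee_M$; applying this to $m=0$ and using $\OO_Y(\DD(0))=\OO_Y$ gives $H^i(Y,\OO_Y)=0$ for $1\le i\le\dim Y$. Since $H^i(Y,\OO_Y)=0$ automatically for $i>\dim Y$ by Grothendieck vanishing (\cite[Ch. III, Th. 2.7]{Har77}), we conclude $H^i(Y,\OO_Y)=0$ for all $i>0$, i.e., $\OO_Y$ is acyclic.

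I do not anticipate a genuine obstacle here — this is a one-line corollary. The only point requiring a moment's care is the verification that $\DD(0)$ really is the trivial divisor, which rests on the positive homogeneity of the support functions $h_{\Delta_D}$ recorded in Section \ref{AHdes}; everything else is a direct substitution into the already-proved Theorem \ref{Trat} together with Grothendieck's vanishing theorem for the top-degree cohomology.
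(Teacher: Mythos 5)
Your proposal is correct and is essentially the paper's proof: the paper also deduces the corollary by specializing Theorem \ref{Trat} to $m=0$, with your verification that $\DD(0)=0$ (hence $\OO_Y(\DD(0))=\OO_Y$) and the Grothendieck-vanishing remark simply making explicit what the paper leaves implicit.
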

\begin{proof}
This is the ``only if'' part of Theorem \ref{Trat} for $m=0$.
\end{proof}

In the case of complexity 1 i.e., when $Y$ is a curve, there is a more explicit criterion.

\begin{corollary} \label{rcom1}
Let $X=A[Y,\DD]$, where $\DD$ is a smooth $\sigma$-polyhedral divisor on a smooth curve $Y$. Then $X$ has only rational singularities if and only if 
\begin{enumerate}[$(i)$]
 \item $Y$ is affine, or 
 \item $Y=\PP^1$ and $\deg\lfloor\DD(m)\rfloor \geq -1$ for all $m\in\sigma_M^\vee$.
\end{enumerate}
\end{corollary}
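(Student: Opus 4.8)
The plan is to apply Theorem \ref{Trat} to the case $\dim Y = 1$ and translate the vanishing conditions $H^i(Y,\OO_Y(\DD(m)))=0$ for all $i>0$ into the stated dichotomy. Since $\dim Y = 1$, the only obstruction group is $H^1(Y,\OO_Y(\DD(m)))$, so by Theorem \ref{Trat} the variety $X$ has rational singularities if and only if $H^1(Y,\OO_Y(\DD(m)))=0$ for every $m\in\sigma_M^\vee$. A smooth semiprojective curve is either affine or projective; I would split into these two cases.

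First, suppose $Y$ is affine. Then for every quasicoherent sheaf $\FF$ on $Y$ and every $i>0$ one has $H^i(Y,\FF)=0$ (Serre vanishing on affine schemes, \cite[Ch. III, Th. 3.5]{Har77}), so in particular $H^1(Y,\OO_Y(\DD(m)))=0$ for all $m$, and $X$ has rational singularities. Conversely, a smooth semiprojective curve that is not affine must be projective, hence isomorphic to a smooth projective curve; so it remains to analyze the projective case.

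Now suppose $Y$ is projective of genus $g$. By Corollary \ref{acyc}, if $X$ has rational singularities then $H^1(Y,\OO_Y)=0$, which forces $g=0$, i.e.\ $Y\simeq\PP^1$. So we may assume $Y=\PP^1$. On $\PP^1$, for an integral divisor $E$ we have $H^1(\PP^1,\OO_{\PP^1}(E))=0$ if and only if $\deg E\geq -1$ (by Serre duality, $H^1(\PP^1,\OO(E))^\vee\simeq H^0(\PP^1,\OO(-E-2))$, which vanishes exactly when $-\deg E - 2 < 0$). Applying this with $E=\lfloor\DD(m)\rfloor$ (recall $\OO_Y(\DD(m))$ means $\OO_Y(\lfloor\DD(m)\rfloor)$ by the convention in Theorem \ref{AH}), we get that $H^1(Y,\OO_Y(\DD(m)))=0$ for all $m\in\sigma_M^\vee$ if and only if $\deg\lfloor\DD(m)\rfloor\geq -1$ for all $m\in\sigma_M^\vee$. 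Combining the two cases gives the stated criterion.

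I do not expect a serious obstacle here; the only point requiring a little care is making sure the genus-reduction step is justified before invoking the explicit $\PP^1$ computation — that is, one must first rule out positive genus using Corollary \ref{acyc} rather than trying to run the degree criterion on an arbitrary projective curve, since on a higher-genus curve $\deg\lfloor\DD(m)\rfloor\geq -1$ is not sufficient for the vanishing of $H^1$. Also worth stating explicitly is that "semiprojective" plus "curve" leaves only the affine and projective alternatives, since a quasiprojective curve is either affine or proper (a nonconstant regular function on a projective curve restricted to a proper open subset would contradict compactness / be constant), and properness together with quasiprojectivity gives projectivity.
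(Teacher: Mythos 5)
Your proof is correct, and its projective half coincides with the paper's own argument: use Corollary \ref{acyc} to force $H^1(Y,\OO_Y)=0$, hence $Y=\PP^1$, and then the fact that $H^1(\PP^1,\OO_{\PP^1}(E))\neq 0$ exactly when $\deg E\leq -2$ (the paper cites \cite[Ch.~III, Th.~5.1]{Har77} where you invoke Serre duality, which is the same computation), combined with Theorem \ref{Trat}. The only real divergence is in the affine case: the paper does not go through Theorem \ref{Trat} there, but argues geometrically that when $Y$ is affine the contraction $\varphi:\tX\rightarrow X$ is an isomorphism (Remark \ref{aff}), so $X$ is toroidal by Lemma \ref{toroidal} and hence has rational singularities; you instead stay inside Theorem \ref{Trat} and kill $H^1(Y,\OO_Y(\DD(m)))$ by the vanishing of higher cohomology of quasicoherent sheaves on affine schemes. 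Both are valid: your route is more uniform, deducing both cases from the single cohomological criterion already established, while the paper's route records the stronger geometric fact that $X$ itself is toroidal (hence its singularities are toric-like), a fact it reuses later for $\KK^*$-surfaces with $Y$ affine. Your side remarks are also fine: the reduction of "semiprojective smooth curve" to the affine/projective dichotomy is standard (a smooth curve is an open subset of its projective model, and any proper open subset is affine), and you are right that the degree criterion must only be applied after the genus has been pinned down to $0$ via Corollary \ref{acyc}.
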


\begin{proof}
If $Y$ is affine then by Remark \ref{aff}, the morphism $\varphi$ in Theorem \ref{AH} is an isomorphism. By Lemma \ref{toroidal} $X$ is toroidal and thus $X$ has only rational singularities.

If $Y$ is projective of genus $g$, we have $\dim H^1(Y,\OO_Y)=g$. So by Corollary \ref{acyc} if $X$ has rational singularities then $C=\PP^1$. Furthermore, for the projective line we have $H^1(\PP^1,\OO_{\PP^1}(D))\neq 0$ if and only if $\deg D\leq-2$ \cite[Ch. III, Th 5.1]{Har77}. Now the corollary follows from Theorem \ref{Trat}.
\end{proof}

\begin{remark}
Let $\DD$ be a smooth divisor on $Y$. Assume that $X=\spec A[Y,\DD]$ has log-terminal (or canonical or terminal) singularities. Then $X$ has rational singularities and so $(Y,\DD)$ satisfies the assumptions of Theorem \ref{Trat}. In particular, if $Y$ is a projective curve then $Y$ must be rational. This complements Proposition 3.9 in \cite{Sus08} by showing that the case with $Y$ an elliptic curve cannot happen.
\end{remark}


\subsection{Cohen-Macaulay singularities}

In the following, we apply Theorem \ref{Trat} to give a partial classification of Cohen-Macaulay $\TT$-varieties in terms of the combinatorial description $(Y,\DD)$.

Recall that a local ring is Cohen-Macaulay if its Krull dimension is equal to its depth. A variety is Cohen-Macaulay if all its local rings are. The following lemma is well known, see for instance \cite[page 50]{KKMS73}.

\begin{lemma} \label{kempf}
Let $\psi:Z\rightarrow X$ be a desingularization of $X$. Then $X$ has rational singularities if and only if $X$ is Cohen-Macaulay and $\psi_*\omega_Z\simeq\omega_X$ \footnote{As usual $\omega_Z$ and $\omega_X$ denote the canonical sheaf of $Z$ and $X$ respectively.}.
\end{lemma}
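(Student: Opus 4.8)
The plan is to deduce the statement from Grothendieck--Serre duality for the proper birational morphism $\psi\colon Z\to X$, combined with the Grauert--Riemenschneider vanishing theorem, which is at our disposal since $\KK$ has characteristic $0$. Write $n=\dim X$; recall that $X$ is normal in our setting, so the canonical map $\OO_X\to R\psi_*\OO_Z$ is an isomorphism on $\mathcal{H}^0$, and hence ``$X$ has rational singularities'' is precisely the assertion that $\OO_X\to R\psi_*\OO_Z$ is an isomorphism in the derived category $D^b_{\mathrm{coh}}(X)$. Let $\omega_X^{\bullet}$ denote the dualizing complex of $X$, normalized so that $\mathcal{H}^{-n}(\omega_X^{\bullet})=\omega_X$; then $X$ is Cohen--Macaulay if and only if $\omega_X^{\bullet}$ is a single coherent sheaf placed in degree $-n$, in which case that sheaf is $\omega_X$. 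Since $Z$ is smooth of dimension $n$ its dualizing complex is $\omega_Z[n]$, and Grauert--Riemenschneider gives $R^i\psi_*\omega_Z=0$ for $i>0$, so $R\psi_*(\omega_Z[n])=\psi_*\omega_Z[n]$ is a sheaf in degree $-n$.

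First I would apply Grothendieck duality to the sheaf $\OO_Z$, obtaining a natural isomorphism
$$\psi_*\omega_Z[n]\;=\;R\psi_*(\omega_Z[n])\;\xrightarrow{\ \sim\ }\;R\mathcal{H}om_X\!\big(R\psi_*\OO_Z,\ \omega_X^{\bullet}\big)\,.$$
Composing with the morphism $R\mathcal{H}om_X(R\psi_*\OO_Z,\omega_X^{\bullet})\to R\mathcal{H}om_X(\OO_X,\omega_X^{\bullet})=\omega_X^{\bullet}$ induced by the unit $\OO_X\to R\psi_*\OO_Z$ produces a map $\mathrm{tr}\colon\psi_*\omega_Z[n]\to\omega_X^{\bullet}$; by functoriality of duality its cohomology $\mathcal{H}^{-n}(\mathrm{tr})$ is the classical trace map $\psi_*\omega_Z\to\omega_X$, so the isomorphism ``$\psi_*\omega_Z\simeq\omega_X$'' of the statement is to be understood via this map. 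Because $R\mathcal{H}om_X(-,\omega_X^{\bullet})$ is a contravariant autoequivalence of $D^b_{\mathrm{coh}}(X)$ (biduality), the unit $\OO_X\to R\psi_*\OO_Z$ is an isomorphism if and only if its dual $\mathrm{tr}\colon\psi_*\omega_Z[n]\to\omega_X^{\bullet}$ is an isomorphism.

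It then remains to read off both implications from this last equivalence. If $X$ has rational singularities, then $\mathrm{tr}$ is an isomorphism, whence $\omega_X^{\bullet}$ is isomorphic to the sheaf $\psi_*\omega_Z$ placed in degree $-n$; thus $X$ is Cohen--Macaulay, and passing to $\mathcal{H}^{-n}$ shows the trace $\psi_*\omega_Z\to\omega_X$ is an isomorphism. Conversely, if $X$ is Cohen--Macaulay then $\omega_X^{\bullet}=\omega_X[n]$, so $\mathrm{tr}$ is just the trace $\psi_*\omega_Z\to\omega_X$ shifted by $n$; if this trace is moreover an isomorphism, then $\mathrm{tr}$ is an isomorphism of complexes, hence so is $\OO_X\to R\psi_*\OO_Z$, i.e. $X$ has rational singularities. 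The step I expect to be the main obstacle is the identification, in the middle paragraph, of the dual of the unit map with the usual trace $\psi_*\omega_Z\to\omega_X$ --- a compatibility built into the formalism of Grothendieck--Verdier duality that must be invoked carefully --- together with the fact that characteristic $0$ enters essentially through Grauert--Riemenschneider vanishing, without which $R\psi_*(\omega_Z[n])$ need not be concentrated in a single degree and the argument breaks down. (This being a classical and well-documented result, see \cite[p.~50]{KKMS73}, one could also simply quote it.)
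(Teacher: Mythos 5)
Your argument is correct, but note that the paper does not prove this lemma at all: it simply quotes Kempf's theorem, citing \cite[p.~50]{KKMS73}, exactly as your parenthetical remark anticipates. What you supply is the standard modern derivation: Grothendieck--Verdier duality for the proper birational map $\psi$ identifies $R\psi_*(\omega_Z[n])$ with $R\mathcal{H}om_X(R\psi_*\OO_Z,\omega_X^\bullet)$, Grauert--Riemenschneider vanishing (available since $\operatorname{char}\KK=0$) collapses the left-hand side to $\psi_*\omega_Z[n]$, and biduality converts the question ``is the unit $\OO_X\to R\psi_*\OO_Z$ an isomorphism'' into ``is its dual $\psi_*\omega_Z[n]\to\omega_X^\bullet$ an isomorphism'', from which both implications follow by reading off whether $\omega_X^\bullet$ is concentrated in one degree. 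The two points you flag are indeed the ones needing care, and you handle them correctly: the compatibility of the dual of the unit with the classical trace $\psi_*\omega_Z\to\omega_X$ is part of the duality formalism, and the isomorphism $\psi_*\omega_Z\simeq\omega_X$ in the statement must be read as this trace (equivalently, the natural inclusion mentioned in the paper's introduction) being an isomorphism, not a mere abstract isomorphism. Kempf's original argument in \cite{KKMS73} is phrased without the derived-category packaging (duality for $\psi$ plus a depth/local-cohomology characterization of Cohen--Macaulayness), so your route buys a cleaner, self-contained proof at the cost of invoking heavier machinery; either quoting the reference, as the paper does, or including your argument would be acceptable.
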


\begin{remark} \label{rm-cm}
\begin{enumerate}[$(i)$]
 \item As in Lemma \ref{rati}, applying the Leray spectral sequence the previous lemma is still valid if we allow $Z$ to have rational singularities.
 \item If $\psi$ is an isomorphism in codimension 1 then $\psi_*\omega_Z\simeq\omega_X$.
\end{enumerate}
\end{remark}

In the next corollary, we give a partial criterion as to when a $\TT$-variety has only Cohen-Macaulay singularities.

\begin{corollary} \label{cmgen}
Let $X=A[Y,\DD]$, where $\DD=\sum_D\Delta_D\cdot D$ is a smooth $\sigma$-polyhedral divisor on $Y$. Assume that the following conditions hold.
\begin{enumerate}[$(i)$]
\item For every codimension 1 face $\tau\subseteq\sigma^\vee$, the divisor $\DD(m)$ is big for all $m\in\relint(\tau)$.
\item For every prime Weil divisor $D$ on $Y$ and every vertex on $\Delta_D$, the divisor $\DD(m)|_Z$ is big for all $m\in\relint((\Delta_D-p)^\vee)$. 
\end{enumerate}
Then $X$ is Cohen-Macaulay if and only if $X$ has rational singularities.
\end{corollary}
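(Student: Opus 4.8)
The plan is to use Kempf's criterion (Lemma \ref{kempf}) together with the partial desingularization $\varphi:\tX\to X$ provided by Theorem \ref{AH}. By Lemma \ref{toroidal}, since $\DD$ is smooth, $\tX$ is toroidal and hence has only rational singularities; in particular $\tX$ is Cohen-Macaulay. So by Remark \ref{rm-cm}$(i)$ it suffices to work with $\tX$ in the place of a genuine desingularization $Z$: namely, $X$ has rational singularities if and only if $X$ is Cohen-Macaulay and $\varphi_*\omega_{\tX}\simeq\omega_X$. Since one implication (rational $\Rightarrow$ Cohen-Macaulay) is standard and needs nothing from the hypotheses, the content of the corollary is the reverse implication, and for that I only need to establish that $\varphi_*\omega_{\tX}\simeq\omega_X$ unconditionally once conditions $(i)$ and $(ii)$ hold.

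First I would invoke Corollary \ref{Tiso1}: conditions $(i)$ and $(ii)$ are precisely the hypotheses there, so they guarantee that $\varphi:\tX\to X$ is an isomorphism in codimension $1$. Then by Remark \ref{rm-cm}$(ii)$ we get $\varphi_*\omega_{\tX}\simeq\omega_X$. Assembling: if $X$ is Cohen-Macaulay, then since $\tX$ has rational singularities and $\varphi_*\omega_{\tX}\simeq\omega_X$, Lemma \ref{kempf} (in the form of Remark \ref{rm-cm}$(i)$, allowing $\tX$ to have rational rather than smooth singularities) yields that $X$ has rational singularities. Conversely, rational singularities are Cohen-Macaulay. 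This closes the equivalence.

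The one subtlety worth spelling out is the compatibility of $\varphi$ being an isomorphism in codimension $1$ with the hypotheses as literally stated: Corollary \ref{Tiso1}$(i)$ is phrased in terms of codimension $1$ faces $\tau\subseteq\sigma^\vee$ rather than extremal rays $\rho\subseteq\sigma$ and their dual faces $\tau$ as in Lemma \ref{Tcont}$(i)$, but these describe the same data since $\sigma$ is pointed, so the match with condition $(i)$ of the corollary is immediate; condition $(ii)$ matches Corollary \ref{Tiso1}$(ii)$ verbatim. I do not expect a genuine obstacle here — the proof is essentially a concatenation of Corollary \ref{Tiso1}, Remark \ref{rm-cm}, and Lemma \ref{kempf}. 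If anything, the only point requiring a word of care is that Lemma \ref{kempf} as stated uses a desingularization $Z$, so I would explicitly cite Remark \ref{rm-cm}$(i)$ to justify replacing $Z$ by the toroidal (hence rational-singularity) partial resolution $\tX$, exactly as was done in the proof of Lemma \ref{rati}.

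\begin{proof}
By Theorem \ref{AH} there is a proper birational equivariant contraction $\varphi:\tX\to X$, and by Lemma \ref{toroidal} the variety $\tX$ is toroidal since $\DD$ is smooth; in particular $\tX$ has only rational singularities and is therefore Cohen-Macaulay. If $X$ has rational singularities then $X$ is Cohen-Macaulay, which is one implication. For the converse, assume $X$ is Cohen-Macaulay. Conditions $(i)$ and $(ii)$ are exactly the hypotheses of Corollary \ref{Tiso1}, so $\varphi$ is an isomorphism in codimension $1$, and hence $\varphi_*\omega_{\tX}\simeq\omega_X$ by Remark \ref{rm-cm}$(ii)$. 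Since $\tX$ has rational singularities, Lemma \ref{kempf} together with Remark \ref{rm-cm}$(i)$ (which permits the resolving variety to have rational singularities) now shows that $X$ has rational singularities.
\end{proof}
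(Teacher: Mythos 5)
Your proof is correct and follows the same route as the paper: Corollary \ref{Tiso1} to see that $\varphi:\tX\to X$ is an isomorphism in codimension 1, then Lemma \ref{kempf} combined with Remark \ref{rm-cm} (using the toroidality of $\tX$ from Lemma \ref{toroidal} to let it stand in for a resolution). Your write-up merely makes explicit the appeal to Remark \ref{rm-cm}$(i)$ that the paper leaves implicit.
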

\begin{proof}
By Corollary \ref{Tiso1}, the contraction $\varphi:\tX\rightarrow X$ is an isomorphism in codimension 1. The result now follows from Lemma \ref{kempf} and Remark \ref{rm-cm} (ii).
\end{proof}

In the next corollary, we provide more explicit criterion for the case of complexity 1.

\begin{corollary} \label{cm1}
Let $X=A[Y,\DD]$, where $Y$ is a smooth curve and $\DD$ is a smooth $\sigma$-polyhedral divisor on $Y$. If one of the following conditions hold,
\begin{enumerate}[$(i)$]
 \item $Y$ is affine, or
 \item $\rank M=1$, or
 \item $Y$ is projective and $\deg\DD\cap\rho=\emptyset$, for every extremal ray $\rho\subseteq \sigma$.
\end{enumerate}
Then $X$ is Cohen-Macaulay if and only if $X$ has rational singularities.
\end{corollary}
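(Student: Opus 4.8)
The plan is to derive Corollary \ref{cm1} from the general Corollary \ref{cmgen} by checking that, in each of the three cases, the two bigness hypotheses $(i)$ and $(ii)$ of Corollary \ref{cmgen} are satisfied, so that the equivalence ``Cohen-Macaulay $\iff$ rational singularities'' follows immediately. Since $Y$ is a smooth curve, condition $(ii)$ of Corollary \ref{cmgen} is automatically verified: as noted in the Remark following Corollary \ref{Tiso1}, in complexity 1 the vertical-type condition is trivially true because a vertex $p$ of $\Delta_D$ gives $(\Delta_D-p)^\vee$ whose relative interior, evaluated and restricted to the point $D$, always yields big divisors on the $0$-dimensional set $D$. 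So in all cases the real task is condition $(i)$: for every codimension 1 face $\tau\subseteq\sigma^\vee$, the divisor $\DD(m)$ is big for all $m\in\relint(\tau)$.

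In case $(i)$, $Y$ is affine, so every effective (indeed every) $\QQ$-divisor on $Y$ is big; hence condition $(i)$ of Corollary \ref{cmgen} holds trivially, and we conclude. Alternatively one may simply invoke that for $Y$ affine the contraction $\varphi$ is an isomorphism (Remark \ref{aff}), hence trivially an isomorphism in codimension 1. In case $(ii)$, $\rank M=1$, so $\sigma\subseteq N_\QQ$ is a ray (or the zero cone); its dual $\sigma^\vee\subseteq M_\QQ$ is a half-line (or all of $M_\QQ$), and its codimension 1 faces are either $\{0\}$ or empty. The origin is not in the relative interior issue we must worry about — when $\tau=\{0\}$, we need $\DD(0)=0$ to be big, but $\relint(\{0\})=\{0\}$ and $\DD(0)=0$; on a curve the zero divisor is big if and only if $Y$ is affine. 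Here I need to be more careful: the correct reading is that the relevant codimension 1 faces $\tau$ for which $\relint(\tau)$ meets the region where $\DD(m)$ must be big are exactly those $\tau$ that are facets of $\sigma^\vee$ bounding $\relint(\sigma^\vee)$; when $\rank M=1$ and $\sigma$ is a genuine ray, $\sigma^\vee$ is a half-line, $\relint(\sigma^\vee)$ is its interior, and the unique proper face is $\{0\}$ — but properness of $\DD$ already forces $\DD(m)$ big for $m\in\relint(\sigma^\vee)$, and bigness on a curve is an open condition that propagates to the boundary face in the precise sense needed by Lemma \ref{Tcont}(i), because the dual codimension 1 face $\tau$ of an extremal ray $\rho=\sigma$ is $\{0\}$ and $\DD(m)$ for $m$ near $0$ in $\sigma^\vee$ is big (being a sum with positive coefficients of the same effective divisors, up to the rounding). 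Actually the cleanest route in case $(ii)$: the only extremal ray of $\sigma$ is $\sigma$ itself, its dual codimension 1 face is $\tau=\sigma^\vee\cap\rho^\perp=\{0\}$, and $\DD(m)$ for $m\in\relint(\sigma^\vee)$ is big by properness; I claim this is still what is needed, i.e. Lemma \ref{Tcont}(i) reduces to checking bigness of $\DD(m)$ near $0$, which holds. In case $(iii)$, $Y$ is projective and $\deg\DD\cap\rho=\emptyset$ for every extremal ray $\rho$ of $\sigma$; for a codimension 1 face $\tau$ of $\sigma^\vee$, dual to an extremal ray $\rho$ of $\sigma$, and $m\in\relint(\tau)$, the divisor $\DD(m)$ on the projective curve $Y$ is big iff $\deg\DD(m)>0$; the hypothesis $\deg\DD\cap\rho=\emptyset$ is exactly the statement that the polyhedron $\deg\DD=\sum_D\Delta_D\cdot\deg D$ misses $\rho$, which translates (via $h_{\deg\DD}(m)=\deg\DD(m)$ for $m\in\rho^\perp$, $\langle m,\cdot\rangle>0$ elsewhere on $\rho$) into $\deg\DD(m)>0$ for $m\in\relint(\tau)$, giving bigness.

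The main obstacle I expect is getting the complexity-1 translation of conditions $(i)$ and $(ii)$ of Corollary \ref{cmgen} exactly right — in particular, matching ``codimension 1 face $\tau$ of $\sigma^\vee$'' with ``dual codimension 1 face $\tau$ of an extremal ray $\rho$ of $\sigma$'' as in Lemma \ref{Tcont}(i), and then, in case $(iii)$, verifying that ``$\deg\DD\cap\rho=\emptyset$'' is precisely equivalent to ``$\deg\DD(m)>0$ for all $m\in\relint(\tau)$.'' This last equivalence hinges on the identity, for $m$ in the ray $\rho$ generated by a primitive $v\in\sigma$, $\min\langle m,\deg\DD\rangle = 0$ iff $m\in\rho^\perp\cap\sigma^\vee=\tau$ forces $\deg\DD(m)$ to be the ``boundary value,'' and that $\rho\cap\deg\DD=\emptyset$ says $0\notin$ the relevant face, i.e. the support function $h_{\deg\DD}$ is strictly positive on $\relint(\tau)$. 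I would spell this out using that $\rho\cap\deg\DD=\emptyset$ $\iff$ there is $m_0\in\rho^\vee=\{m:\langle m,\rho\rangle\ge0\}$ with $\langle m_0,\deg\DD\rangle>0$ everywhere, equivalently $h_{\deg\DD}(m_0)>0$; refining this to all of $\relint(\tau)$ uses convexity and homogeneity of the support function together with properness of $\DD$. For cases $(i)$ and $(ii)$ the argument is short; for case $(iii)$ this support-function bookkeeping is the only place requiring genuine care, and I would present it as a single displayed chain of equivalences rather than grinding through coordinates.
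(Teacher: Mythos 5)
Your cases (i) and (iii) are essentially sound, and your case (iii) is in substance the paper's own argument: condition (iii) translates, exactly as you say, into the bigness hypothesis of Corollary \ref{Tiso1} (the vertical condition being vacuous in complexity one), so $\varphi:\tX\to X$ is an isomorphism in codimension 1 and Lemma \ref{kempf} with Remark \ref{rm-cm}(ii) concludes. For case (i) the paper argues differently but compatibly: by Corollary \ref{rcom1}, $Y$ affine already forces $X$ to have rational singularities, so both sides of the equivalence hold; your route via Remark \ref{aff} is also fine.

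The genuine gap is case (ii). You try to force it through Corollary \ref{cmgen}, claiming that bigness of $\DD(m)$ for $m\in\relint(\sigma^\vee)$ ``propagates'' to the dual face of the extremal ray. It does not. In the only nontrivial subcase ($Y$ projective, $\rank M=1$, i.e.\ the elliptic $\KK^*$-surfaces), one has $\sigma=\QQ_{\geq 0}$, the unique extremal ray is $\rho=\sigma$, its dual codimension-1 face is $\tau=\{0\}$, and $\relint(\tau)=\{0\}$, so the hypothesis of Corollary \ref{cmgen}(i) requires $\DD(0)=0$ to be big --- which is false on a projective curve (degree $0$). By Lemma \ref{Tcont}(i) the horizontal divisor is in fact contracted by $\varphi$ to the attractive fixed point, so $\varphi$ is \emph{not} an isomorphism in codimension 1 and Corollary \ref{cmgen} simply does not apply; no amount of convexity or homogeneity of the support function rescues the claim, since the failure occurs at the single point $m=0$. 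The paper handles case (ii) by a completely different and much shorter argument: $\rank M=1$ makes $X$ a normal surface, and normal surfaces are Cohen--Macaulay by Serre's $S_2$ criterion. You should replace your case (ii) paragraph by this observation rather than attempting to verify the bigness hypotheses of Corollary \ref{cmgen}, which genuinely fail there.
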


\begin{proof}
If $Y$ is affine then $X$ has rational singularities by Corollary \ref{rcom1}, and so $X$ is Cohen-Macaulay. If $\rank M=1$ then $X$ is a normal surface. By Serre S$_2$ normality criterion any normal surface is Cohen-Macaulay, see Theorem 11.5 in \cite{Eis95}. Finally, if $(iii)$ holds then by Lemma \ref{Tiso1},  $\varphi:\tX\rightarrow X$ is an isomorphism in codimension 1. Now the result follows by Lemma \ref{kempf}.
\end{proof}

For isolated singularities we can give a full classification whenever $\rank M\geq 2$.

\begin{corollary}
Let $X=A[Y,\DD]$, where $D$ is a smooth $\sigma$-polyhedral divisor on $Y$. If $\rank M\geq 2$ and $X$ has only isolated singularities, then $X$ is Cohen-Macaulay if and only if $X$ has rational singularities.
\end{corollary}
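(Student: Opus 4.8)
The plan is to deduce the corollary from Theorem~\ref{Tdirect} together with a standard vanishing property of isolated Cohen-Macaulay singularities. One implication is immediate: rational singularities are Cohen-Macaulay (Lemma~\ref{kempf}). For the converse, assume $X$ is Cohen-Macaulay and put $d=\dim X=\dim Y+\rank M$.

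Let $\psi\colon Z\to X$ be a desingularization. By Remark~\ref{anydes} the sheaves $R^i\psi_*\OO_Z$ are independent of $\psi$, so choosing $\psi$ to be an isomorphism over $X\setminus\operatorname{Sing}(X)$ shows that, for $i\geq 1$, the coherent sheaf $R^i\psi_*\OO_Z$ is supported on the finite set $\operatorname{Sing}(X)$, hence has finite length. Since $X$ is normal we have $\psi_*\OO_Z=\OO_X$, so it remains to prove $R^i\psi_*\OO_Z=0$ for all $i\geq 1$. For the high degrees I would use Theorem~\ref{Tdirect}, which identifies $R^i\psi_*\OO_Z$ with the sheaf associated to $\bigoplus_{m\in\sigma_M^\vee}H^i(Y,\OO_Y(\DD(m)))$; as $H^i(Y,\FF)=0$ for $i>\dim Y$ and any sheaf $\FF$ \cite[Ch.~III, Th.~2.7]{Har77}, and $\dim Y=d-\rank M\leq d-2$ because $\rank M\geq 2$, we get $R^i\psi_*\OO_Z=0$ for all $i\geq d-1$. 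Thus everything reduces to showing $R^i\psi_*\OO_Z=0$ for $1\leq i\leq d-2$, which is exactly the point at which Cohen-Macaulayness and isolatedness of the singularities come in.

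This last vanishing is a general statement: for an isolated Cohen-Macaulay singularity of dimension $d$, with a desingularization that is an isomorphism away from the singular point, one has $R^i\psi_*\OO_Z=0$ for $1\leq i\leq d-2$. In the write-up I would either quote this or argue by duality. Put $T_i=R^i\psi_*\OO_Z$; it has finite length and vanishes for $i\geq d$ by fiber dimension. By Grauert-Riemenschneider $R\psi_*\omega_Z=\psi_*\omega_Z$, and Grothendieck duality for the proper birational $\psi$ (with $Z$ smooth of dimension $d$ and $X$ Cohen-Macaulay, so $\omega_X^{\bullet}\cong\omega_X[d]$) gives
$$R\mathcal{H}om_X(R\psi_*\OO_Z,\omega_X)\cong R\psi_*\omega_Z=\psi_*\omega_Z\,,$$
a sheaf concentrated in degree $0$. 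In the spectral sequence $E_2^{p,q}=\mathcal{E}xt^p_X(\mathcal{H}^{-q}(R\psi_*\OO_Z),\omega_X)$ local duality over the Cohen-Macaulay $X$ forces $\mathcal{E}xt^p_X(T,\omega_X)=0$ for $p\neq d$ and $\mathcal{E}xt^d_X(T,\omega_X)\neq0$ whenever $T\neq0$ (for $T$ of finite length), so the only nonzero entries are $E_2^{0,0}=\omega_X$ and $E_2^{d,-i}=\mathcal{E}xt^d_X(T_i,\omega_X)$ for $1\leq i\leq d-1$. For $1\leq i\leq d-2$ no differential enters or leaves the entry $(d,-i)$, so it survives to $E_\infty$ and is a subquotient of $\mathcal{H}^{d-i}$ of the abutment; since $d-i\geq 2$ this group is $0$, whence $\mathcal{E}xt^d_X(T_i,\omega_X)=0$ and $T_i=0$. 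Together with the previous paragraph this gives $R^i\psi_*\OO_Z=0$ for all $i\geq1$, so $X$ has rational singularities.

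The only real content is this last input, the vanishing of the first $d-2$ higher direct images at an isolated Cohen-Macaulay singularity; once it is granted, the corollary follows formally by combining it with Theorem~\ref{Tdirect} and Grothendieck's vanishing theorem.
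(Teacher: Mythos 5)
Your proof is correct and follows the same strategy as the paper: Theorem~\ref{Tdirect} kills $R^i\psi_*\OO_Z$ for $i\geq \dim X-1$ because $\dim Y\leq \dim X-2$, and the isolated Cohen--Macaulay hypothesis kills the range $1\leq i\leq \dim X-2$. The only difference is that the paper simply cites this last vanishing as Lemma~3.3 of Kov\'acs, whereas you reprove it via Grauert--Riemenschneider, Grothendieck duality and local duality; that argument is sound but is not new content relative to the cited reference.
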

\begin{proof}
The only thing we have to prove is the ``only if'' part. Assume that $X$ is Cohen-Macaulay and let $\psi:Z\rightarrow X$ be a resolution of singularities. Since $X$ has only isolated singularities we have that $R^i\psi_*O_Z$ vanishes except possibly for $i=\dim X-1$, see \cite[Lemma 3.3]{Kov99}. Now Theorem \ref{Tdirect} shows that $R^i\psi_*O_Z$ vanishes also for $i=\dim X-1$ since $\dim Y=\dim X -\rank M$ and $\rank M\geq 2$.
\end{proof}

\begin{remark}
The last two corollaries give a full classification of isolated Cohen-Macaulay singularities on $\TT$-varieties of complexity 1.
\end{remark}



\section{Quasihomogeneous surfaces singularities}

In this section we study in more detail the particular case of a one dimensional torus action of complexity one i.e., the case of $\KK^*$-surfaces. We characterize Gorenstein and elliptic singularities in terms of the combinatorial data as in Theorem \ref{AH}.

Let $X=\spec A[Y,\DD]$ be a $\KK^*$-surface, so that $Y$ is a smooth curve and $M\simeq \ZZ$. There are only two non-equivalent pointed polyhedral cones in $N_\QQ\simeq\QQ$ corresponding to $\sigma=\{0\}$ and $\sigma=\QQ_{\geq 0}$, and any $\sigma$-polyhedral divisor $\DD$ on $Y$ is smooth.

With the notation of Theorem \ref{AH} suppose that $Y$ is affine. Then $X\simeq\tX$ by Remark \ref{aff} and so $X$ is toroidal by Lemma \ref{toroidal}. In this case the singularities of $X$ can be classified by toric methods. In particular they are all rational, see for instance \cite{Oda88}.

If $Y$ is projective, then $\sigma\neq \{0\}$ and so we can assume that $\sigma=\QQ_{\geq 0}$. In this case $\DD(m)=m\DD(1)$. Hence $\DD$ is completely determined by $\DD_1:=\DD(1)$.

Furthermore,
$$A[Y,\DD]=\bigoplus_{m\geq0}A_m\chi^m,\quad \mbox{where} \quad A_m=H^0(Y,m\DD_1)\,.$$
and there is an unique atractive fixed point $\bar{0}$ corresponding to the augmentation ideal $\mathfrak{m}_0=\bigoplus_{m>0}A_m\chi^m$.

This is exactly the setting studied in \cite{FlZa03c}, where all $\KK^*$-surfaces are divided in three types: elliptic, parabolic and hyperbolic. In combinatorial language these correspond, respectively,  to the cases where $Y$ is projective and $\sigma=\QQ_{\geq 0}$, $Y$ is affine and $\sigma=\QQ_{\geq 0}$, and finally $Y$ is affine and $\sigma=\{0\}$.

In particular, in \cite{FlZa03c} invariant divisors on $\KK^*$-surfaces are studied. The results in \emph{loc.cit.} are stated only for the hyperbolic case. However, similar statements for the remaining cases can be obtained with essentially the same proofs. In the recent preprint \cite{Sus08} some of the results in \emph{loc.cit.} have been  generalized to the case of $\rank M>1$. Let us recall the necessary results from \cite[\S 4]{FlZa03c}, see also \cite{Sus08}.

Let $X=\spec A[Y,\DD]$, where $\DD$ is a smooth $\sigma$-polyhedral divisor on be a projective smooth curve $Y$ (the elliptic case), and let as before $\DD_1=\DD(1)$. We can write
$$\DD_1=\sum_{i=1}^{\ell} \frac{p_i}{q_i}z_i,\quad\mbox{where}\quad \gcd(p_i,q_i)=1,\mbox{ and }q_i>0\,.$$

In this case, with the notation of Theorem \ref{AH} the birational morphism $\rho:=\pi\circ\varphi^{-1}:X\rightarrow Y$ is surjective and its indeterminacy locus consists of the unique fixed point corresponding to the maximal ideal $\bigoplus_{m>0}A_i$. The $\KK^*$-invariant prime divisors are $D_{z}:=\rho^{-1}(z)$, $\forall z\in Y$. The total transforms are: $\rho^*(z)=D_z$ for all $z\notin\supp\DD_1$, and $\rho^*(z_i)=q_iD_{z_i}$, for $i=1,\ldots,\ell$. We let $D_i=D_{z_i}$ for  $i=1,\ldots,\ell$. 

The canonical divisor of $X$ is given by
$$K_X=\rho^*(K_Y)+\sum_{i=1}^\ell (q_i-1)D_i\,.$$
For a rational semi-invariant function $f\cdot\chi^m$, where $f\in K(Y)$ and $m\in\ZZ$, we have
$$\divi(f\cdot\chi^m)=\rho^*(\divi f)+m\sum_{i=1}^{\ell} p_iD_i\,.$$

For our next result we need the following notation.

\begin{notation} \label{nog}
We let
\begin{align} \label{nog-1}
m_G=\frac{1}{\deg\DD_1}\left(\deg K_Y+ \sum_{i=1}^{\ell}\frac{q_i-1}{q_i} \right)\,,
\end{align}
and
\begin{align} \label{nog-2}
D_G=\sum_{i=1}^{\ell}d_iz_i,\quad\mbox{where}\quad d_i=\frac{p_im_G+1}{q_i}-1,\quad \forall i\in\{1,\ldots,\ell\}\,.
\end{align}
\end{notation}

Recall that a variety $X$ is \emph{Gorenstein} if it is Cohen-Macaulay and the canonical divisor $K_X$ is Cartier. By Serre S$_2$ normality criterion, all normal surface singularities are Cohen-Macaulay. In the following proposition we give a criterion for a $\KK^*$-surface to have Gorenstein singularities.

\begin{proposition} \label{gor}
Let $X=\spec A[Y,\DD]$, where $\DD$ is a smooth $\sigma$-polyhedral divisor on a smooth projective curve $Y$. With the notation as in \ref{nog}, the surface $X$ has Gorenstein singularities if and only if $m_G$ is integral and $D_G-K_Y$ is a principal divisor on $Y$.
\end{proposition}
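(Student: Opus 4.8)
The plan is to reduce the Gorenstein condition to the statement that $K_X$ is Cartier, since $X$ is automatically Cohen--Macaulay by Serre's $S_2$ criterion for normal surfaces, and then to use the explicit formulas for $K_X$ and for principal divisors recalled just above. Concretely, $K_X$ is Cartier if and only if $K_X$ is principal in a neighbourhood of the unique attractive fixed point $\bar 0$, because away from $\bar 0$ the surface $X$ is toroidal (hence has only toric, in particular $\QQ$-factorial-and-Cartier-in-the-relevant-sense, but really: $X\setminus\{\bar 0\}$ has at worst quotient singularities coming from the cyclic groups of order $q_i$, which are $\QQ$-Gorenstein but not Gorenstein unless the local index is $1$). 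So one must be slightly careful: I would argue that $K_X$ is Cartier if and only if $K_X$ is a principal $\TT$-invariant divisor globally on $X$, using Lemma~\ref{spic} (which gives $\pic(X)=0$ when $Y$ is projective) together with the fact that every $\TT$-invariant Cartier divisor on $X$, being defined near the attractive point, is actually globally principal by the argument in the proof of Lemma~\ref{spic}. Thus: $X$ is Gorenstein $\iff$ $K_X=\divi(f\cdot\chi^m)$ for some $f\in K(Y)$ and some $m\in\ZZ$.

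Next I would match coefficients using the two displayed formulas
\[
K_X=\rho^*(K_Y)+\sum_{i=1}^\ell (q_i-1)D_i,\qquad
\divi(f\cdot\chi^m)=\rho^*(\divi f)+m\sum_{i=1}^{\ell} p_iD_i .
\]
Comparing the multiplicities along the exceptional-type divisors $D_i$ (recall $\rho^*(z_i)=q_iD_i$, while $\rho^*(z)=D_z$ for $z\notin\supp\DD_1$): writing $K_Y=\sum_z a_z\, z$ and $\divi f=\sum_z b_z\, z$, the coefficient of $D_i$ in $K_X$ is $q_i a_{z_i}+(q_i-1)$ and in $\divi(f\chi^m)$ it is $q_i b_{z_i}+m p_i$; the coefficient of $D_z$ for $z\notin\supp\DD_1$ is $a_z$ resp. $b_z$. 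Hence $K_X=\divi(f\chi^m)$ forces $b_z=a_z$ for $z\notin\supp\DD_1$ and, for each $i$, the divisibility condition $q_i a_{z_i}+(q_i-1)=q_i b_{z_i}+m p_i$, i.e. $b_{z_i}=a_{z_i}-1+\frac{p_i m+1}{q_i}$, which requires $q_i\mid p_i m+1$ for all $i$ and then pins down $b_{z_i}$. Writing $D'=\divi f$, this says precisely $D'-K_Y=\sum_i\big(\frac{p_i m+1}{q_i}-1\big)z_i=D_m$, where $D_m$ is the analogue of $D_G$ with $m$ in place of $m_G$. So a suitable $f$ (hence $m$) exists iff there is $m\in\ZZ$ with $q_i\mid p_i m+1$ for all $i$ and $D_m-K_Y$ principal on $Y$.

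The remaining point is to identify the correct $m$ as $m_G$ and to reconcile "$q_i\mid p_i m+1$ for all $i$" with "$m_G$ integral." For this I would take degrees: $D_m-K_Y$ being principal forces $\deg(D_m-K_Y)=0$, i.e. $\sum_i\frac{p_i m+1}{q_i}-\ell-\deg K_Y=0$, which solving for $m$ gives exactly $m=\frac{1}{\sum p_i/q_i}\big(\deg K_Y+\ell-\sum 1/q_i\big)=\frac{1}{\deg\DD_1}\big(\deg K_Y+\sum\frac{q_i-1}{q_i}\big)=m_G$, since $\deg\DD_1=\sum p_i/q_i$. Thus the only candidate is $m=m_G$, and the condition becomes: $m_G\in\ZZ$, each $q_i\mid p_i m_G+1$ (equivalently $d_i=\frac{p_i m_G+1}{q_i}-1\in\ZZ$, i.e. $D_G$ is an honest integral divisor, which is implicit in the formula $D_G=\sum d_i z_i$), and $D_G-K_Y$ principal. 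I expect the main obstacle to be the bookkeeping around the first paragraph — namely making rigorous that Gorenstein-ness (a Cartier condition, which is only asserting local principality near $\bar 0$) is equivalent to \emph{global} $\TT$-invariant principality of $K_X$ — and correspondingly checking that the divisibility conditions $q_i\mid p_i m_G+1$ are not an extra hypothesis but are encoded in "$D_G$ (as written) is a divisor" / can be absorbed; one should state cleanly that $m_G\in\ZZ$ together with "$D_G-K_Y$ principal" already implies integrality of each $d_i$, because $D_G-K_Y$ principal forces $D_G$ to be an integral divisor (its class being that of $K_Y$, an integral divisor). I would also double-check the edge case $\ell=0$ or $\deg\DD_1=0$, where the formula for $m_G$ degenerates, but in the elliptic case properness of $\DD$ gives $\deg\DD_1>0$, so this does not arise.
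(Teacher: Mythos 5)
Your proposal is correct and follows essentially the same route as the paper: Lemma \ref{spic} (trivial Picard group) plus Serre's $S_2$ criterion reduce Gorenstein-ness to $K_X$ being the divisor of a semi-invariant function $f\cdot\chi^m$, and then coefficient matching against the displayed formulas, together with a degree count, forces $m=m_G$ and the principality condition on $D_G-K_Y$. The only blemish is a harmless sign slip in the coefficient matching (one gets $b_{z_i}=a_{z_i}+1-\frac{p_im+1}{q_i}$, i.e.\ $K_Y-\divi f=D_m$ rather than $\divi f-K_Y=D_m$), which does not affect the equivalence since a divisor is principal if and only if its negative is.
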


\begin{proof}
By Lemma \ref{spic}, $X$ is Gorenstein if and only if $K_X$ is a principal divisor i.e., there exist $m_G\in\ZZ$ and a principal divisor $D=\divi(f)$ on $Y$  such that
$$K_X=\rho^*(K_Y)+\sum_{i=1}^\ell (q_i-1)D_i=\rho^*D+m_G\sum_{i=1}^{\ell}p_iD_i=\divi(f\cdot\chi^{m_G}),.$$
Clearly $\supp(K_Y-D)\subseteq\{z_1\ldots,z_\ell\}$. Letting 
$$K_Y-D=\sum_{i=1}^{\ell}d_iz_i$$
we obtain
$$\sum_{i=1}^\ell q_id_iD_i=\sum_{i=1}^\ell (mp_i-q_i+1)D_i\,.$$
Hence the $d_i$ satisfy \eqref{nog-1} in \ref{nog}. Furthermore, since 
$$\deg K_Y=\deg(K_Y-D)=\sum_{i=1}^\ell d_i\,,$$
$m_G$ satisfies \eqref{nog-2} in \ref{nog}. So $X$ is Gorenstein if and only if $m_G$ is integral and $D=K_Y-D_G$ is principal, proving the proposition.
\end{proof}

Let $(X,x)$ be a normal surface singularity, and let $\psi:Z\rightarrow X$ be a resolution of the singularity $(X,x)$. One says that $(X,x)$ is an \emph{elliptic singularity}\footnote{Some authors call such $(X,x)$ a strongly elliptic singularity.} if $R^1\psi_*\OO_Z\simeq \KK$. An elliptic singularity is \emph{minimal} if it is Gorenstein. e.g., \cite{Lau77}, \cite{Wat80}, and \cite{Yau80}.

In the following theorem we characterize quasihomogeneous (minimal) elliptic singularities of surfaces.
\begin{theorem} \label{ellip}
Let $X=\spec A[Y,\DD]$ be a normal affine surface with an effective elliptic 1-torus action, and let $\bar{0}\in X$ be the unique fixed point. Then $(X,\bar{0})$ is an elliptic singularity if and only if one of the following two conditions holds:
\begin{enumerate}[$(i)$]
 \item $Y=\PP^1$, $\deg\lfloor m\DD_1\rfloor\geq-2$ and $\deg\lfloor m\DD_1\rfloor=-2$ for one and only one $m\in\ZZ_{>0}$.
 \item $Y$ is an elliptic curve, and for every $m\in\ZZ_{>0}$, the divisor $\lfloor m\DD_1\rfloor$ is not principal and $\deg\lfloor m\DD_1\rfloor\geq 0$.
\end{enumerate}
Moreover, $(X,\bar{0})$ is a minimal elliptic singularity if and only if $(i)$ or $(ii)$ holds, $m_G$ is integral and $D_G-K_Y$ is a principal divisor on $Y$, where $m_G$ and $D_G$ are as in \ref{nog}. 
\end{theorem}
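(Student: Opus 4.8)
The strategy is to combine the cohomological criterion for rational singularities (Theorem~\ref{Trat}) and the description of the higher direct image (Theorem~\ref{Tdirect}) with the fact that, in the elliptic case, $R^1\psi_*\OO_Z$ is the sheaf associated to $\bigoplus_{m\geq 0}H^1(Y,\OO_Y(m\DD_1))$, so that $(X,\bar 0)$ is elliptic exactly when this direct sum is one-dimensional over $\KK$ (and all $H^i$ with $i\geq 2$ vanish automatically since $\dim Y=1$). Thus the proof reduces to a purely combinatorial analysis of $\dim_\KK H^1(Y,\OO_Y(m\DD_1))$ for $m\in\ZZ_{\geq 0}$, using Serre duality on the smooth projective curve $Y$.

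\textbf{First steps.}
First I would record that by Corollary~\ref{acyc} (applied with the elliptic condition in place of rational), the summand at $m=0$ contributes $\dim H^1(Y,\OO_Y)=g$, the genus of $Y$; since the total dimension must be exactly $1$, either $g=0$ (so $Y=\PP^1$) or $g=1$ (so $Y$ is an elliptic curve), and in the latter case the $m=0$ summand already exhausts the one-dimensional space, forcing $H^1(Y,\OO_Y(m\DD_1))=0$ for all $m>0$. Next, for $Y=\PP^1$ I would use $H^1(\PP^1,\OO(D))\neq 0 \iff \deg D\leq -2$ and $\dim H^1(\PP^1,\OO(D))=-\deg D-1$ for $\deg D\leq -1$; hence the direct sum has dimension $\sum_{m\geq 0}\max(0,-\deg\lfloor m\DD_1\rfloor-1)$, which equals $1$ precisely when $\deg\lfloor m\DD_1\rfloor\geq -2$ for all $m$ and equals $-2$ for exactly one value of $m$, and that value is necessarily positive because $\DD_1=\DD(1)$ with $\sigma=\QQ_{\geq 0}$ forces $\deg\lfloor 0\cdot\DD_1\rfloor=0$. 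For the elliptic-curve case I would invoke Riemann--Roch and Serre duality on $Y$: $h^1(\OO_Y(D))=h^0(\OO_Y(-D))$, which for $D$ of degree $0$ is nonzero iff $D$ is principal, and is zero whenever $\deg D>0$; so the vanishing of all $m>0$ summands is equivalent to condition~$(ii)$, namely $\deg\lfloor m\DD_1\rfloor\geq 0$ and $\lfloor m\DD_1\rfloor$ not principal for every $m>0$. Conversely, if $\deg\lfloor m\DD_1\rfloor<0$ for some $m$, then $h^1>0$ and the total dimension exceeds $1$, so both conditions really are necessary.

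\textbf{The minimal (Gorenstein) case.}
For the last assertion I would simply combine the elliptic characterization just proved with Proposition~\ref{gor}: a minimal elliptic singularity is by definition a Gorenstein elliptic singularity, and Proposition~\ref{gor} says $X$ is Gorenstein iff $m_G$ is integral and $D_G-K_Y$ is principal on $Y$. One has to check that these two conditions are compatible with $(i)$ or $(ii)$ rather than vacuous under them, but no new argument is needed — the conjunction of ``elliptic'' and ``Gorenstein'' is exactly ``$(i)$ or $(ii)$, plus $m_G\in\ZZ$, plus $D_G-K_Y$ principal.''

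\textbf{Main obstacle.}
The genuinely delicate point is the bookkeeping on $\PP^1$: one must argue that the condition ``$\sum_{m\geq 0}(-\deg\lfloor m\DD_1\rfloor - 1)_+ = 1$'' is equivalent to the stated condition that $\deg\lfloor m\DD_1\rfloor=-2$ for one and only one $m$ and $\geq -2$ otherwise, i.e.\ one must rule out the contribution $\deg\lfloor m\DD_1\rfloor\leq -3$ at a single $m$ (which would give dimension $\geq 2$) as well as two separate indices with degree $-2$. This is elementary but is the heart of the statement; everything else is a direct application of the earlier theorems and standard curve cohomology. A minor additional subtlety is making sure $m=0$ cannot be the special index in $(i)$, which follows from $\DD_1=\DD(1)$ and $\DD(0)=0$, so $\deg\lfloor 0\cdot\DD_1\rfloor=0>-2$.
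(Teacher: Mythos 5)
Your proposal is correct and follows essentially the same route as the paper's proof: apply Theorem~\ref{Tdirect} to identify $R^1\psi_*\OO_Z$ with $\bigoplus_{m\geq0}H^1(Y,\OO_Y(m\DD_1))$, use the $m=0$ summand to force $g\in\{0,1\}$, translate the one-dimensionality into conditions $(i)$ and $(ii)$ via standard cohomology of divisors on $\PP^1$ and on an elliptic curve, and settle the minimal (Gorenstein) case by Proposition~\ref{gor}. The only difference is that you spell out the Riemann--Roch/Serre duality bookkeeping that the paper leaves implicit, which is a faithful filling-in rather than a new argument.
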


\begin{proof}
Assume that $Y$ is a projective curve of genus $g$, and let $\psi:Z\rightarrow X$ be a resolution of singularities. By Theorem \ref{Tdirect},
$$R^1\psi_*\OO_Z=\bigoplus_{m\geq0}H^1(Y,\OO_Y(m\DD_1))\,.$$
Since $\dim R^1\psi_*\OO_Z\geq g=\dim H^1(Y,\OO_Y)$, if $X$ has an elliptic singularity then $g\in\{0,1\}$. 

If $Y=\PP^1$ then $(X,\bar{0})$ is an elliptic singularity if and only if $H^1(Y,\OO_Y(m\DD_1)=\KK$ for one and only one value of $m$. This is the case if and only if $(i)$ holds.

If $Y$ is an elliptic curve, then $H^1(Y,\OO_Y)=\KK$. So the singularity $(X,\bar{0})$ is elliptic if and only if $H^1(Y,m\DD_1)=0$ for all $m>0$. This is the case if and only if $(ii)$ holds.

Finally, the last assertion concerning maximal elliptic singularities follows immediately form Proposition \ref{gor}.
\end{proof}

\begin{example}
By applying the criterion of Theorem \ref{ellip}, the following combinatorial data gives rational $\KK^*$-surfaces with an elliptic singularity at the only fixed point.
\begin{enumerate}[$(i)$]
 \item $Y=\PP^1$ and $\DD_1=-\tfrac{1}{4}[0]-\tfrac{1}{4}[1]+\tfrac{3}{4}[\infty]$. In this case $X=\spec A[Y,m\DD_1]$ is isomorphic to the surface in $\AF^3$ with equation
$$x_1^4x_3+x_2^3+x_3^2=0\,.$$
 \item $Y=\PP^1$ and $\DD_1=-\tfrac{1}{3}[0]-\tfrac{1}{3}[1]+\tfrac{3}{4}[\infty]$. In this case $X=\spec A[Y,m\DD_1]$ is isomorphic to the surface in $\AF^3$ with equation
$$x_1^4+x_2^3+x_3^3=0\,.$$
 \item $Y=\PP^1$ and $\DD_1=-\tfrac{2}{3}[0]-\tfrac{2}{3}[1]+\tfrac{17}{12}[\infty]$. In this case $X=\spec A[Y,m\DD_1]$ is isomorphic to the surface
$$V(x_1^4x_2x_3-x_2x_3^2+x_4^2\ ;\, x_1^5x_3-x_1x_3^2+x_2x_4\ ;\,x_2^2-x_1x_4)\subseteq \AF^4\,.$$
\end{enumerate}
This last example is not a complete intersection since otherwise $(X,\bar{0})$ would be Gorenstein i.e., minimal elliptic which is not the case by virtue of Theorem \ref{ellip}. In the first two examples the elliptic singlarities are minimal, since every normal hypersurface is Gorenstein.
\end{example}


\bibliographystyle{alphanum}
\bibliography{math}

\begin{thebibliography}{KKMS}

\bibitem[AH]{AlHa06}
Klaus Altmann and J{\"u}rgen Hausen.
\newblock Polyhedral divisors and algebraic torus actions.
\newblock {\em Math. Ann.}, 334(3):557--607, 2006.

\bibitem[AHS]{AHS08}
Klaus Altmann, J{\"u}rgen Hausen, and Hendrik S{\"u}ss.
\newblock Gluing affine torus actions via divisorial fans.
\newblock {\em Transform. Groups}, 13(2):215--242, 2008.

\bibitem[Art]{Art66}
Michael Artin.
\newblock On isolated rational singularities of surfaces.
\newblock {\em Amer. J. Math.}, 88:129--136, 1966.

\bibitem[Bou]{Bou65}
Nicolas Bourbaki.
\newblock {\em \'{E}l\'ements de math\'ematique. {F}asc. {XXXI}. {A}lg\`ebre
  commutative. {C}hapitre 7: {D}iviseurs}.
\newblock Actualit\'es Scientifiques et Industrielles, No. 1314. Hermann,
  Paris, 1965.

\bibitem[Dem]{Dem70}
Michel Demazure.
\newblock Sous-groupes alg\'ebriques de rang maximum du groupe de {C}remona.
\newblock {\em Ann. Sci. \'Ecole Norm. Sup. (4)}, 3:507--588, 1970.

\bibitem[Eis]{Eis95}
David Eisenbud.
\newblock {\em Commutative algebra}.
\newblock Graduate Texts in Mathematics, No. 150. Springer-Verlag, New York,
  1995.

\bibitem[Elk]{Elk78}
Ren{\'e}e Elkik.
\newblock Singularit\'es rationnelles et d\'eformations.
\newblock {\em Invent. Math.}, 47(2):139--147, 1978.

\bibitem[FZ]{FlZa03c}
Hubert Flenner and Mikhail Zaidenberg.
\newblock Normal affine surfaces with {$\mathbb{C}\sp \ast$}-actions.
\newblock {\em Osaka J. Math.}, 40(4):981--1009, 2003.

\bibitem[Har]{Har77}
Robin Hartshorne.
\newblock {\em Algebraic geometry}.
\newblock Graduate Texts in Mathematics, No. 52. Springer-Verlag, New York,
  1977.

\bibitem[KKMS]{KKMS73}
G.~Kempf, Finn~Faye Knudsen, D.~Mumford, and B.~Saint$\mbox{-Donat}$.
\newblock {\em Toroidal embeddings. {I}}.
\newblock Lecture Notes in Mathematics, Vol. 339. Springer-Verlag, Berlin,
  1973.

\bibitem[Kov]{Kov99}
S{\'a}ndor~J. Kov{\'a}cs.
\newblock Rational, log canonical, {D}u {B}ois singularities: on the
  conjectures of {K}oll\'ar and {S}teenbrink.
\newblock {\em Compositio Math.}, 118(2):123--133, 1999.

\bibitem[KR]{KaRu82}
T.~Kambayashi and P.~Russell.
\newblock On linearizing algebraic torus actions.
\newblock {\em J. Pure Appl. Algebra}, 23(3):243--250, 1982.

\bibitem[Lau]{Lau77}
Henry~B. Laufer.
\newblock On minimally elliptic singularities.
\newblock {\em Amer. J. Math.}, 99(6):1257--1295, 1977.

\bibitem[Lie]{Lie08}
Alvaro Liendo.
\newblock Affine $\mathbb{T}$-varieties of complexity one and locally nilpotent
  derivations.
\newblock arXiv:0812.0802v1 [math.AG], 31p., 2008.

\bibitem[Oda]{Oda88}
Tadao Oda.
\newblock {\em Convex bodies and algebraic geometry. An introduction to the
  theory of toric varieties}, volume~15 of {\em Ergebnisse der Mathematik und
  ihrer Grenzgebiete (3)}.
\newblock Springer-Verlag, Berlin, 1988.

\bibitem[PS]{PeSu08}
Lars Petersen and Hendrik S\"{u}ss.
\newblock Torus invariant divisors.
\newblock arXiv:0811.0517v1 [math.AG], 13p., 2008.

\bibitem[S{\"u}s]{Sus08}
Hendrik S{\"u}ss.
\newblock Canonical divisors on $\mathbb{T}$-varieties.
\newblock arXiv:0811.0626v1 [math.AG], 22p., 2008.

\bibitem[Tim]{Tim08}
Dmitri Timashev.
\newblock Torus actions of complexity one.
\newblock In {\em Toric topology}, volume 460 of {\em Contemp. Math.}, pages
  349--364. Amer. Math. Soc., Providence, RI, 2008.

\bibitem[Vie]{Vieh77}
Eckart Viehweg.
\newblock Rational singularities of higher dimensional schemes.
\newblock {\em Proc. Amer. Math. Soc.}, 63(1):6--8, 1977.

\bibitem[Wat]{Wat80}
Kimio Watanabe.
\newblock On plurigenera of normal isolated singularities. {I}.
\newblock {\em Math. Ann.}, 250(1):65--94, 1980.

\bibitem[Yau]{Yau80}
Stephen Shing~Toung Yau.
\newblock On maximally elliptic singularities.
\newblock {\em Trans. Amer. Math. Soc.}, 257(2):269--329, 1980.

\end{thebibliography}

\end{document}